\tikzset{every picture/.style={line width=0.11mm}}
\newcommand{\oPerpSymbol}{\begin{tikzpicture}[scale=0.134]
  \draw (0,-0.5)--(0,1); \draw (-0.866,-0.5)--(0.866,-0.5);
  \draw (0,0) circle [radius=1];
\end{tikzpicture}}
\newcommand{\oPerp}{\mathbin{\raisebox{-1pt}{\oPerpSymbol}}}
\DeclareMathAlphabet{\pazocal}{OMS}{zplm}{m}{n}
\def\abs#1{\vert #1\vert}
\def\st{\mathop{\hbox{\rm st}}}
\def\shrinkage{-2.4mu}
\def\vecsign#1{\rule[1.388\LMex]{\dimexpr#1-2.5pt}{.36\LMpt}%
  \kern-6.0\LMpt\mathchar"017E}
\def\theraysign#1{\rule{0pt}{17\LMpt}\rule[1.384\LMex]{\dimexpr#1-2.5pt}{.40\LMpt}%
  \kern-6.0\LMpt\mathchar"017E}
\def\raysign#1{\rule{0pt}{7\LMpt}\smash{%
  \SavedStyle\mkern-\shrinkage\theraysign{#1}}}
\def\ray#1{\ThisStyle{\setbox0=\hbox{$\SavedStyle#1$}%
  \def\useanchorwidth{T}\stackon[-4.2\LMpt]{\SavedStyle#1}{\,\raysign{\wd0}}}}
\def\neg{\mathop{\text{neg}}}
\def\operpf#1{\mathop{\oPerp^{\F}_{#1}}}
\def\sf{\bot^{\!\!\tiny\F}}
\def\roots{\mathop{\hbox{\bf roots}}}
\def\remove#1{}
\def\esc#1{\langle #1\rangle}
\def\escd#1{\langle\!\langle #1\rangle\!\rangle}
\def\esct#1{\langle\!\langle\!\langle #1\rangle\!\rangle\!\rangle}
\def\F{\mathcal F}
\newcommand{\K}{{\mathbb{K}}}
\newcommand{\FC}{{\mathbb{F}}}
\newcommand{\U}{{\mathfrak{U}}}
\def\N{\mathbb{N}}
\def\R{\mathbb{R}}
\def\C{\mathbb{C}}
\def\hipR{{^*\R}}
\def\hipC{{^*\C}}
\def\hipK{{^*\K}}
\def\rad{\mathop{\hbox{\bf rad}}}
\def\l{\lambda}
\def\m{\mu}
\def\a{\alpha}
\def\b{\beta}
\def\ch{\mathop{\hbox{\rm char}}}
\def\so{simultaneously orthogonalizable}
\def\sd{simultaneously diagonalizable}
\def\T{\mathcal T}
\def\span{\mathop{\hbox{\rm span}}}
\def\End{\mathop{\hbox{\rm End}}}
\def\PF{\hbox{\bf P}_{\mathbf{F}}}
\def\o{\omega}
\def\r{\mathfrak r}
\def\ii{{\boldsymbol{i}}}
\def\oo{\otimes}
\def\path{\mathop{\hbox{\rm path}}}
\def\escemp{\esc{\cdot,\cdot}}
\newtheorem{lemma}{Lemma}
\newtheorem{proposition}{Proposition}
\newtheorem{corollary}{Corollary}
\newtheorem{definition}{Definition}
\newtheorem{theorem}{Theorem}
\newtheorem{example}{Example}
\newenvironment{remark}
  {\pushQED{\qed}\remarkx}
  {\popQED\endremarkx}
\title[Simultaneous orthogonalization in infinite-dimensional spaces]{Simultaneous orthogonalization of inner products in infinite-dimensional vector spaces}
\author[Y. Cabrera]{Yolanda Cabrera Casado}
\author[C. Gil]{Crist\'obal Gil Canto}
\author[D. Mart\'{\i}n]{Dolores Mart\'in Barquero}
\author[C. Mart\'{\i}n]{C\'andido Mart\'in Gonz\'alez}
\address{Departamento de Matem\'atica Aplicada, E.T.S. Ingenier\'\i a Inform\'atica, Universidad de M\'alaga, Campus de Teatinos s/n. 29071 M\'alaga.   Spain. }
\email{yolandacc@uma.es}
\address{Departamento de Matem\'atica Aplicada, E.T.S. Ingenier\'\i a Inform\'atica, Universidad de M\'alaga, Campus de Teatinos s/n. 29071 M\'alaga.   Spain.}
\email{cgilc@uma.es}
\address{Departamento de Matem\'atica Aplicada, Escuela de Ingenier\'\i as Industriales, Universidad de M\'alaga, Campus de Teatinos s/n. 29071 M\'alaga.   Spain.}
\email{dmartin@uma.es}
\address{ Departamento de \'Algebra Geometr\'{\i}a y Topolog\'{\i}a, Fa\-cultad de Ciencias, Universidad de M\'alaga, Campus de Teatinos s/n. 29071 M\'alaga.   Spain.}
\email{candido\_m@uma.es}
\thanks{The  authors are supported by the Junta de Andaluc\'{i}a  through projects  FQM-336 and UMA18-FEDERJA-119 and  by the Spanish Ministerio de Ciencia e Innovaci\'on   through project  PID2019-104236GB-I00,  all of them with FEDER funds.}
\begin{document}

\subjclass[2020] {15A63, 11E04} 
\keywords{Inner product, simultaneous orthogonalization, simultaneous diagonalization, ultrafilter.}

\maketitle

\begin{abstract}
For an arbitrary field $\mathbb{K}$ and a family of inner products in a $\mathbb{K}$-vector space $V$ of arbitrary dimension, we study necessary and sufficient conditions in order to have an orthogonal basis relative to all the inner products. If the family contains a nondegenerate element plus a compatibility condition, then under mild hypotheses the simultaneous orthogonalization can be achieved. So we investigate several constructions whose purpose is to add a nondegenerate element to a degenerate family and we study under what conditions the enlarged family is nondegenerate.
\end{abstract}

\section{Introduction and preliminaries}
Historically the research about the problem of simultaneous orthogonalization of two inner products over a $\K$-vector space of finite dimension, with $\K$ a field satisfying different restrictions, has been largely studied by several authors, see, for example, the works \cite{Finsler}, \cite{Calabi}, \cite{Wo}, \cite{Uhligart}, \cite{Greub} and \cite{Becker}. The analogous problem for a family with two or more inner products over a $\K$-vector space of finite dimension has been considered in \cite{BMV} when the ground field is the real or complex numbers. In general, for an arbitrary field $\K$ and a finite-dimensional vector space over $\K$, a thorough study has been recently realised in \cite{CGMM}. In fact, \cite{CGMM} has motivated a natural follow-up to the simultaneous orthogonalization of families of inner products over a vector space of arbitrary dimension over an arbitrary field $\K$.
\newline
\indent The paper is organized as follows: Section 2 contains the connection between the notion of roots and simultaneous diagonalization of families of endomorphisms. This is preparatory for the results on simultaneous orthogonalization that we pursue. Section 3 deals firstly with some topological notions that we will need in order to introduce the concept of a nondegenerate family of inner products. 
This latest notion will lead us to divide the problem of simultaneous orthogonalization into two procedures, depending on whether the family of inner products is nondegenerate or not. 
The nondegenerate case is contained in Theorem \ref{ogacem} and its corollaries. The degenerate case is studied in the remaining subsection (see Theorem \ref{oepem} and its corollary). In Section 4 we consider different constructions whose goal is to modify a given family $\F$ of inner products, where possibly all of them are degenerated, as to get a new family containing a nondegenerate inner product and whose orthogonalization is
induced by an orthogonalization of $\F$. To this end, we use several philosophies, some of them exploit the idea of adding a suitable linear combination which turns out to be nondegenerate. Others use an ultrafilter construction that we particularize at the end of the paper to the case in which the ground field is $\R$ or $\C$. To do so, we introduce new concepts related to the idea of how elements on the vector space behave, such as pathological (or negligible in the real-complex case) elements. This will guarantee, together with the simultaneous orthogonalization, the nondegenerancy of new certain families of inner products constructed from the original one. We are talking about Theorem \ref{pajaro} and Corollary \ref{playita} when the base field is arbitrary and Theorem \ref{wwe} for $\R$ or $\C$.

From now on we will use the following basic definitions and notations.  The complementary of a subset $A$ of $X$ will be denoted by $X\setminus A$ or by $\complement A$ (if there is no possible ambiguity). We write the cardinal of $X$ as $\rm{card}(X)$. If $I$ is a set, $\PF(I)$ will denote the set of finite parts of $I$. \newline
\indent We consider the naturals with zero included: $\N=\{0,1,\ldots\}$ and use the notation $\N^*=\N\setminus\{0\}$. For a field $\K$ we use any of the notations $\K^*$ or $\K^\times$ for the multiplicative group $\K\setminus\{0\}$. \newline 
\indent All through this work, we denote by $V$  a $\K$ vector space where $\K$ is the field. We use ${\rm char}(\K)$ for the characteristic of $\K$. If $V$ is a $\K$-vector space and $S$ is a subset of $V$, we  write $\span(S)$ to denote the subspace generated by $S$. A vector subspace $H$ of a vector space $V$ is a {\it hyperplane} if  $\dim(V/H)=1$. A vector subspace $H$ of a vector space $V$ is {\it proper} if it is other than the whole space and $H$ is {\it trivial} if it is the null space.\newline
\indent By an {\it inner product} on a $\K$-vector space $V$ we will mean a symmetric bilinear form $\esc{\cdot,\cdot}\colon V\times V\to \K$. And with $(V,\esc{\cdot,\cdot})$ we refer to the so-called  {\it inner product $\K$-vector space}. We denote by $V^{\circ}:=\{v \in V \colon  \esc{v,v}=0\}$ the \emph{set of isotropic} vectors of $V$. Observe that if ${\rm char}(\K)=2$, then $V^\circ$ is a subspace of $V$. If $U$ is a vector subspace of $V$, we can consider the inner product $\K$-vector space $(U,\esc{\cdot,\cdot}\vert_U)$ where $\esc{\cdot,\cdot}\vert_U$ is the restriction of $\esc{\cdot,\cdot}$ to $U$.  We denote by $U^{\perp}:=\{x \in V \colon \esc{x,U}=0\}$. We say that $U$ is  {\it $\perp$-closed} if $U^{\perp \perp}=U$.  For an element $u\in V$ we set $u^\bot:=(\K u)^\bot$.
Define $\rad(V,\esc{\cdot,\cdot}):=\{x\in V\colon \esc{x,V}=0\}$. If there is no possible confusion we will write $\rad(\esc{\cdot,\cdot})$. When $\rad(\esc{\cdot,\cdot})=0$ we say that $(V,\esc{\cdot,\cdot})$ is \emph{nondegenerate}. In this case we may consider the {\it dual pair} $(V,V)$ in the sense of \cite[\S IV, section 6, Definition 1, p. 69]{Jacobson}. For us, $\oPerp ^{\esc{\cdot,\cdot}}$ means orthogonal direct sum relative to the inner product $\esc{\cdot,\cdot}$. If it is clear we write $\oPerp$ instead of  $\oPerp ^{\esc{\cdot,\cdot}}$. A family of inner products $\F=\{\esc{\cdot,\cdot}_i\}_{i\in I}$ is said to be \emph{simultaneously orthogonalizable} if there exists a basis $\{v_j\}_{j\in\Lambda}$ of $V$ such that for any $i\in I$ we have   
$\esc{v_j,v_k}_i=0$ whenever $j\ne k$. 
Finally, we denote by $\oPerp^{\F}$ the orthogonal direct sum relative to any inner product of $\F$ and  by $\sf $ the perpendicularity relationship for any $\esc{\cdot,\cdot}\in\F$.

\section{Roots and root spaces}\label{gatito}
Let $\T\subset \End(V)$ where $\End(V)$ is the set of $\K$-linear maps $V\to V$. A map $\a\colon\T\to\K$ is said to be a {\em root} for $\T$ if and only if there is some nonzero $v\in V$ such that for any $T\in\T$ one has $T(v)=\a(T)v$. Any such vector $v$ is called a {\em root vector} of $\T$ (relative to $\a$) and the set of all $w\in V$ such that $T(w)=\a(T)w$ (for any $T\in\T$) is called the {\em root space} of $\a$ and denoted $V_\a$. Observe that the constant map $0\colon\T\to\K$ such that $T\mapsto 0$ is a root of $\T$ if and only if 
$\cap_{T\in\T}\ker(T)\ne 0$. Now let $\K\T$ denote the $\K$-subspace of $\End(V)$ generated by $\T$. 
We will denote by $\roots(\T)$ the set of all roots of $\T$. Then we have: 
\begin{lemma} 
There is a canonical bijection $\tau\colon \roots(\T)\cong\roots(\K\T)$
such that $\tau(\a)\vert_\T=\a$ for any $\a\in\T$.
\end{lemma}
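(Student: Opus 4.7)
The plan is to define $\tau$ by linear extension and recover its inverse by restriction. Given a root $\alpha\in\roots(\T)$ with root vector $v\neq 0$, I propose to set, for $S=\sum_{i=1}^n \lambda_i T_i\in\K\T$ (with $T_i\in\T$ and $\lambda_i\in\K$),
\[
\tau(\alpha)(S):=\sum_{i=1}^n \lambda_i\,\alpha(T_i).
\]
Two things need to be checked here. First, well-definedness: if $\sum\lambda_iT_i=\sum\mu_jT_j'$ as elements of $\End(V)$, apply both sides to $v$ to get $\sum\lambda_i\alpha(T_i)\,v=\sum\mu_j\alpha(T_j')\,v$, and since $v\neq 0$ the scalars coincide. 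Second, $\tau(\alpha)$ is a root of $\K\T$ with the same root vector $v$: by construction $S(v)=\sum\lambda_iT_i(v)=\bigl(\sum\lambda_i\alpha(T_i)\bigr)v=\tau(\alpha)(S)\,v$ for every $S\in\K\T$. The formula shows $\tau(\alpha)$ is $\K$-linear and that its restriction to $\T$ equals $\alpha$, which is the compatibility clause.

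For the inverse, I would send $\beta\in\roots(\K\T)$ to its restriction $\beta|_\T$. A root vector $v$ of $\beta$ also witnesses $\beta|_\T$ as a root of $\T$, so the restriction indeed lands in $\roots(\T)$. Note that any root of $\K\T$ is automatically $\K$-linear: picking any $v\neq 0$ with $S(v)=\beta(S)v$ for all $S\in\K\T$, the identities $(\lambda S+S')(v)=\lambda S(v)+S'(v)$ force $\beta(\lambda S+S')=\lambda\beta(S)+\beta(S')$ after cancelling $v$.

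It remains to check the two compositions. For $\alpha\in\roots(\T)$, $\tau(\alpha)|_\T=\alpha$ by the defining formula. For $\beta\in\roots(\K\T)$, both $\beta$ and $\tau(\beta|_\T)$ are $\K$-linear maps $\K\T\to\K$ agreeing on the spanning set $\T$, hence $\tau(\beta|_\T)=\beta$. This yields the claimed canonical bijection.

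The only subtle point is the well-definedness in the first paragraph, which would fail for an arbitrary map $\alpha\colon\T\to\K$ but is rescued precisely by the existence of a common root vector $v$; this is the essential content of the lemma. Everything else is formal linear algebra.
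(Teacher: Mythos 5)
Your proposal is correct and follows essentially the same route as the paper: extend $\alpha$ linearly to $\K\T$, use a root vector to verify well-definedness of the extension, note that the same vector witnesses $\tau(\alpha)$ as a root, and invert by restriction. The only cosmetic difference is that the paper phrases well-definedness as independence of a chosen basis of $\K\T$, whereas you verify it directly for arbitrary finite representations; these are the same check.
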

\begin{proof}
Given a root $\a\in\roots(\T)$ we must extend it to a root $\tau(\a)$ of $\K\T$. To do this, we can fix a basis of $\K\T$ given by $\{T_i\}_{i\in I}\subset\T$ and then, for a generic element $\sum_i \l_i T_i\in\K\T$ with $\lambda_i\in\K$ and $T_i\in\T$ we define $\tau(\a)(\sum_i\l_i T_i):=\sum_i\l_i\a(T_i)$. This definition does not depend on the chosen basis. Indeed, if we consider a different basis $\{S_j\}\subset \T$, then we must prove that when $\sum_i\l_i T_i=\sum_j\mu_j S_j$ (with $\l_i,\mu_j\in\K$) we get 
$\sum_i\l_i \a(T_i)=\sum_j\mu_j\a(S_j)$. To check this equality, we take a root vector $v$ relative to $\a$ and apply both members of the equality $\sum_i\l_i T_i=\sum_j\mu_j S_j$ to $v$. Now the well defined map $\tau(\a)\colon\K\T\to\K$ is a root of $\K\T$ because any root vector of $\T$ relative to $\a$ is a root vector of $\K\T$ relative to $\tau(\a)$. So we have a map $\tau\colon\roots(\T)\to\roots(\K\T)$.
Also, note that $\tau(\a)(T_i)=\a(T_i)$ for any element $T_i$ of the basis $\{T_i\}$, hence the restriction of $\tau(\a)$ to $\T$ is $\a$. This proves that $\tau$ is injective. To see that $\tau$ is surjective consider a root $\b$ of $\K\T$ and let $\a:=\b\vert_\T$.
Then $\a$ is a root of $\T$ and
$\a(T_i)=\b(T_i)$ for any basic element $T_i$.
Since $\tau(\a)(T_i)=\a(T_i)$ also, we conclude that $\tau(\a)=\b$.
\end{proof}
So if $\T\subset\End(V)$, then essentially $\T$ and $\K\T$ have the same roots. This is why we can replace $\T$ with $\K\T$ in our work. Therefore, we can assume from the beginning that the set of linear maps $\T$ is in fact a vector subspace. 
In this setting, we see that any $\a\in\roots(\T)$ is a linear map: 
\begin{enumerate}[label=(\roman*)]
    \item If $T,S\in\T$ we consider a root vector $v$ of $\T$ relative to $\a$ and then $(T+S)(v)=\a(T+S)v$ and on the other hand $T(v)+S(v)=\a(T)v+\a(S)v$ so that 
    $\a(T+S)=\a(T)+\a(S)$.
    \item If $T\in\T$ we consider a root vector $v$ of $\T$ relative to $\a$ and $\l\in\K$, then $(\l T)(v)=\a(\l T)v$ and $\l T(v)=\l\a(T)v$ whence $\l\a(T)=\a(\l T)$. 
\end{enumerate}
So we have that $\roots(\T)$ is a subspace of the dual space.
\begin{lemma} \label{igual} Let $\T$ be a vector subspace of $\End(V)$, for two roots $\a,  \b$ of $\T$,  we have that  $V_\a=V_\b$  if and only if  $\a=\b$. 
\end{lemma}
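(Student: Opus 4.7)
The plan is to prove both implications directly from the definitions, exploiting the fact that being a root guarantees the existence of a nonzero root vector in the corresponding root space.

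For the easy direction ($\alpha = \beta \Rightarrow V_\alpha = V_\beta$), I would simply note that the definition of $V_\alpha$ depends only on the values of $\alpha$ on $\T$, so equal roots trivially produce the same root space.

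For the main direction ($V_\alpha = V_\beta \Rightarrow \alpha = \beta$), I would proceed as follows. Since $\alpha$ is a root of $\T$, by definition there exists a nonzero vector $v \in V$ with $T(v) = \alpha(T) v$ for every $T \in \T$; such a $v$ lies in $V_\alpha$. Using the assumption $V_\alpha = V_\beta$, this same nonzero $v$ also lies in $V_\beta$, so $T(v) = \beta(T) v$ for every $T \in \T$ as well. Subtracting the two equalities gives $(\alpha(T) - \beta(T)) v = 0$ for all $T \in \T$, and since $v \neq 0$ we conclude $\alpha(T) = \beta(T)$, i.e.\ $\alpha = \beta$.

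There is essentially no obstacle here: the argument is a one-line consequence of the defining property that a root admits a nonzero root vector. The only point worth being explicit about is precisely that $V_\alpha \setminus \{0\}$ is nonempty by virtue of $\alpha$ being a root (and not merely a linear functional on $\T$), which is exactly what lets us cancel $v$ in the final step.
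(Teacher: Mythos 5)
Your proof is correct and takes essentially the same approach as the paper: pick a nonzero root vector $v$ for $\alpha$, use $V_\alpha=V_\beta$ to see that $v$ is also a root vector for $\beta$, and cancel $v$ to conclude $\alpha(T)=\beta(T)$ for all $T\in\T$. Your explicit remark that the nonvanishing of $v$ comes from $\alpha$ being a root (not merely a functional) is exactly the point the paper's argument implicitly relies on.
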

\begin{proof}
Take a  root vector $v$ of $\T$ relative to $\a$. Then for any
$T\in\T$ we have $T(v)=\a(T)v$ but $v\in V_\b$ hence $T(v)=\b(T)v$ whence $\a(T)=\b(T)$ and since $T$ is arbitrary $\a=\b$. The other implication is straightforward.
\end{proof}

Recall that a family $\T \subset \End{(V)}$ is {\it simultaneously diagonalizable} if there exists a basis $B$ of $V$ such that $T(v)\in \K v$ for any $v \in B$.
\begin{proposition}\label{palmera} Let V be a $\K$-vector space and let $\T$  be a vector subspace of $\End(V)$. Then we have the following assertions:
\begin{enumerate}[label=(\roman*)]
    \item If $\T$ is  simultaneously diagonalizable and $B=\{v_i\}_{i\in I}$ a basis of $V$ diagonalizing all the elements of $\T$, define
for each $i\in I$ a map $\a_i\colon\T\to\K$ such that 
$\a_i(T)$ is the eigenvalue of $v_i$ relative to $T\in \T$. 
Now, from the indexed family of roots $\{\a_i\colon i\in I\}$ we eliminate repetitions obtaining a set $\Phi$. Then
$$V=\bigoplus_{\a_i\in \Phi}V_{\a_i}$$
where $V_{\a_i}:=\{x\in V \colon \forall \,  T\in\T, T(x)=\a_i(T)x\}$.
\item Conversely, if $V$ is a direct sum of root spaces relative to some set of roots of $\T$, then $\T$ is simultaneously diagonalizable.
\item \label{coco} If all the elements of $\T$ are self-adjoint relative to an inner product $\esc{\cdot, \cdot}$ defined on $V$, then the root spaces are
pairwise orthogonal.
\end{enumerate}
\end{proposition}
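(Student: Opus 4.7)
The plan is to treat (i), (ii), (iii) in turn, with Lemma \ref{igual} only implicit in identifying distinct roots as distinct linear functionals on $\T$.

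For (i), each basis vector $v_i\in B$ is by construction a root vector relative to $\a_i$, so $\K v_i\subseteq V_{\a_i}$; summing over $i$ gives $V=\sum_{\a\in\Phi} V_\a$. The nontrivial task is directness, which I would establish by induction on the number of summands: any relation $w_1+\cdots+w_n=0$ with $w_j\in V_{\b_j}$ and pairwise distinct $\b_j\in\Phi$ forces each $w_j=0$. The case $n=1$ is immediate. For the inductive step, since $\b_1\ne \b_n$ as maps $\T\to\K$, choose $T\in\T$ with $\b_1(T)\ne\b_n(T)$; applying $T$ to the relation gives $\sum_j \b_j(T)\,w_j=0$, and subtracting $\b_n(T)$ times the original relation yields
$$\sum_{j<n}(\b_j(T)-\b_n(T))\, w_j=0.$$
Each surviving summand still lies in its own root space, so the inductive hypothesis applied to this shorter relation forces $(\b_j(T)-\b_n(T))w_j=0$ for all $j<n$. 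The case $j=1$ yields $w_1=0$, and substituting back into the original relation produces $w_2+\cdots+w_n=0$, to which the inductive hypothesis again applies to conclude that every $w_j$ vanishes.

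For (ii), choose any basis $B_\a$ of each summand $V_\a$ in the given decomposition and set $B=\bigcup_\a B_\a$; then $B$ is a basis of $V$ and every $v\in B_\a$ satisfies $T(v)=\a(T)v\in\K v$ for each $T\in\T$, so $B$ simultaneously diagonalizes $\T$.

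For (iii), let $\a,\b$ be distinct roots of $\T$, take $v\in V_\a$ and $w\in V_\b$, and pick $T\in\T$ with $\a(T)\ne\b(T)$. Self-adjointness of $T$ gives
$$\a(T)\esc{v,w}=\esc{T(v),w}=\esc{v,T(w)}=\b(T)\esc{v,w},$$
so $(\a(T)-\b(T))\esc{v,w}=0$ and hence $\esc{v,w}=0$.

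The main obstacle is organizing the induction in (i) so that a single separating $T\in\T$ is enough to drop the length of the relation, rather than invoking a Vandermonde-type argument that would require extra hypotheses on $\K$. Once that is handled cleanly, parts (ii) and (iii) are essentially one-line consequences.
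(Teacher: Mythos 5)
Your proofs of (ii) and (iii) match the paper's word for word. For (i) you take a genuinely different route to directness. The paper's proof is more constructive: it shows that each root space $V_{\a_i}$ is exactly $\span\{v_j\in B\colon \a_j=\a_i\}$ (by expanding an arbitrary $x\in V_{\a_i}$ in the basis $B$ and comparing the action of a generic $T$ on both sides, concluding that only the $v_l$'s with $\a_l=\a_i$ contribute), so that distinct root spaces are spanned by disjoint subsets of $B$ and directness is immediate via Lemma \ref{igual}. You instead prove the abstract linear-independence statement for root spaces of a commuting family by induction on the length of a putative dependence relation, selecting a single $T\in\T$ separating $\b_1$ from $\b_n$ and subtracting $\b_n(T)$ times the relation to drop the length; your handling of the fact that the inductive hypothesis only gives $(\b_j(T)-\b_n(T))w_j=0$ (so only $w_1=0$ falls out directly, after which you rerun the induction on $w_2+\cdots+w_n=0$) is careful and correct, and as you note it avoids any Vandermonde-style assumption on $\K$. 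Both approaches are sound over an arbitrary field; the paper's gives the slightly stronger conclusion that each $V_\a$ inherits a basis from $B$, which it reuses informally elsewhere, while yours is basis-free on the directness side and is the argument one would give without a prescribed diagonalizing basis in hand.
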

\begin{proof}

By the definition of $\a_i$ we have $T(v_i)=\a_i(T)v_i$ hence
$v_i\in V_{\a_i}$. Let us prove that for any $i$ we have
$$V_{\a_i}=\span(\{v_j \in B \colon V_{\a_j}=V_{\a_i}\}).$$
Let $0\ne x\in V_{\a_i}$ and write it in terms of the basis $B$. Thus $x=\sum_l k_l v_l$ (for scalars $k_l\in\K$ not all null). 
Then, for an arbitrary $T\in\T$, we have $\sum_l\a_i(T)k_l v_l=\sum_l k_l T(v_l)$,  whence for the $l$'s such that $k_l\ne 0$ we conclude that $\a_i(T)=\a_l(T)$  since $v_l\in V_{\a_l}$. Consequently $V_{\a_i}$ is the linear span of the $v_l$'s such that $\a_l=\a_i$.
Thus each $V_{\a_i}$ has a basis consisting of elements of $B$. This implies, applying Lemma \ref{igual}, that the sum $\sum_{\a_i\in\Phi}V_{\a_i}$ is direct. To show that it coincides with the whole $V$ take into account that for any $v_k\in B$ the root $\a_k$ is in $\Phi$ hence $v_k\in V_{\a_k}$.  The second assertion of the proposition is straightforward considering a basis of each root space and performing the union of these. To prove item \ref{coco},
consider two roots $\a,\b$ with $\a\ne\b$. Then there is some $T\in\T$ such that $\a(T)\ne\b(T)$. So for any
$x\in V_\a$ and $y\in V_\b$ we have 
$\a(T)\esc{x,y}=\esc{T(x),y}=\esc{x,T(y)}=\b(T)\esc{x,y}$. Since $\a(T)\ne\b(T)$ we conclude $\esc{x,y}=0$.
\end{proof}
\begin{corollary}
If $\T$ is a subspace of $\End(V)$ which is simultaneously diagonalizable, denote by $\hat\T$ the subalgebra of $\End(V)$ generated by $\T$. Then any root $\a\colon\T\to\K$ can be extended to a homomorphism of $\K$-algebras $\hat\a\colon\hat\T\to\K$.
\end{corollary}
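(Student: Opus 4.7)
The plan is to construct $\hat\a$ directly through the action of $\hat\T$ on the root space $V_\a$. The key observation is that $V_\a$ is stable under each $T\in\T$ (since $T(v)=\a(T)v\in V_\a$ for $v\in V_\a$), so $V_\a$ is stable under every product $T_1T_2\cdots T_n$ of elements of $\T$. A straightforward induction shows that such a product acts on $V_\a$ as multiplication by the scalar $\a(T_1)\a(T_2)\cdots\a(T_n)$. Taking $\K$-linear combinations, every element $A\in\hat\T$ acts on $V_\a$ as multiplication by a single scalar, which I will call $\hat\a(A)$.

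Well-definedness is essentially free: since $\a$ is a root, $V_\a\neq 0$, so fixing any nonzero $v\in V_\a$, the scalar $\hat\a(A)$ is characterized by $A(v)=\hat\a(A)v$, and the characterization does not depend on the particular expression of $A$ as a polynomial in elements of $\T$. (It also does not depend on the choice of $v$, since we just showed $A$ acts as a single scalar on all of $V_\a$.)

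Then I would verify that $\hat\a\colon\hat\T\to\K$ is a homomorphism of $\K$-algebras extending $\a$. Fixing a nonzero $v\in V_\a$, additivity and $\K$-linearity follow from evaluating on $v$: $(\l A+\m B)(v)=\l\hat\a(A)v+\m\hat\a(B)v$. Multiplicativity is the computation
\[
\hat\a(AB)v \;=\; (AB)(v) \;=\; A\bigl(\hat\a(B)v\bigr) \;=\; \hat\a(B)A(v) \;=\; \hat\a(A)\hat\a(B)\,v.
\]
The extension property $\hat\a(T)=\a(T)$ for $T\in\T$ is immediate from the definition, and if one regards $\hat\T$ as unital then $\hat\a(1_V)=1$ because $1_V$ acts as the identity on $V_\a$.

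I do not anticipate a real obstacle: the entire argument rests on the fact that $V_\a$ is a nonzero invariant subspace on which each $T\in\T$ acts by a scalar, and this passes automatically to products and linear combinations, i.e.\ to all of $\hat\T$. The only point to be slightly careful about is confirming that the scalar by which $A\in\hat\T$ acts on $V_\a$ is independent of how $A$ is written as a combination of products from $\T$, but this is handled at once by the characterization $A(v)=\hat\a(A)v$ for a fixed nonzero $v\in V_\a$.
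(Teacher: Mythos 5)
Your proof is correct and follows the same underlying idea as the paper's: define $\hat\a$ by the natural multiplicative formula on monomials in $\T$, with well-definedness secured by evaluating on a fixed nonzero root vector $v\in V_\a$. The paper states the formula and leaves well-definedness as "easy to prove"; you make that verification explicit by observing that $V_\a$ is $\hat\T$-invariant and every element of $\hat\T$ acts on it by a scalar, which is exactly the coherence the paper is implicitly invoking.
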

\begin{proof}
The natural extension of $\a$ should be 
$$\hat\a(\sum_{i_1,\ldots,i_n}k_{i_1,\ldots,i_n}T_{i_1}\cdots T_{i_n}):=\sum_{i_1,\ldots,i_n}k_{i_1,\ldots,i_n}\a(T_{i_1})\cdots \a(T_{i_n}).$$ It is easy to prove that $\hat\a$ is well-defined.
\end{proof}

Observe that for each root $\a$, the projection of $V$ onto  $V_\a$ is an idempotent of $\End(V)$ and the collection of all such projections is a system of orthogonal idempotents whose sum is $1_V$. Concretely, the idempotents in \cite[Theorem 4.10 (c)]{IMR} are these projections onto root subspaces. This establishes a link of the current section with \cite{IMR}.

\section{Simultaneous orthogonalization in infinite dimension} 

In the first subsection we will consider the most favorable case of simultaneous orthogonalization, roughly speaking, the case in which the family contains a
nondegenerate inner product. Then we advance to the case in which there is
one inner product whose radical is contained in the radical of the remaining
members of the family. Before approaching the simultaneous orthogonalization of a family of inner products we would like to observe that in a finite-dimensional space endowed with two
inner products, with the radical of  the first one contained in the radical of the second one, the  latter  can be written
in terms of the former. This fact will play a substantial roll also in
the infinite-dimensional case.

A linear algebra elementary result states that if $R,S\colon V\to V$ are linear maps with $\ker(R)\subset\ker(S)$, then there is a linear map $P\colon V\to V$ such that $S=PR$. We use this result to prove the following lemma:

\begin{lemma}
Let $V$ be a finite-dimensional vector space with $\esc{\cdot,\cdot}_0$ and $\esc{\cdot,\cdot}_1$ two inner products on
$V$ such that 
$\rad(\esc{\cdot,\cdot}_0)\subset \rad(\esc{\cdot,\cdot}_1)$. Then $\esc{x,y}_1=\esc{T(x),y}_0$ for some linear map $T\colon V\to V$ and any $x,y \in V$. 
\end{lemma}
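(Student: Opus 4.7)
My plan is to encode the two inner products as Gram matrices, translate the radical inclusion into a kernel inclusion, invoke the cited factorization result, and use the symmetry of the bilinear forms to place the auxiliary factor on the correct side. Concretely, I fix a basis of $V$ and let $A$ and $B$ denote the Gram matrices of $\esc{\cdot,\cdot}_0$ and $\esc{\cdot,\cdot}_1$, respectively; both are symmetric since the forms are. Regarding $A$ and $B$ as elements of $\End(V)$ through the basis, one checks immediately that $\ker A=\rad(\esc{\cdot,\cdot}_0)$ and $\ker B=\rad(\esc{\cdot,\cdot}_1)$, so the hypothesis reads $\ker A\subset\ker B$.

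The cited elementary result then produces a linear map $P\colon V\to V$ with $B=PA$. This is almost, but not quite, what is required: writing the target identity in coordinates, $\esc{T(x),y}_0=x^t T^t Ay$ while $\esc{x,y}_1=x^t By$, so the statement asks for $T^t A=B$, not $PA=B$. Symmetry supplies the bridge: transposing $B=PA$ and using $A^t=A$ and $B^t=B$ gives $B=AP^t$, equivalently $(P^t)^t A=B$. Setting $T:=P^t$ one then has $T^t A=B$, and unwinding the coordinates yields $\esc{T(x),y}_0=\esc{x,y}_1$ for all $x,y\in V$.

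I expect the only delicate point to be this transposition step: the cited lemma places the factor on the left of $A$, while the statement wants it in the first slot of $\esc{\cdot,\cdot}_0$, which corresponds algebraically to the opposite side. The symmetry of the two Gram matrices is precisely what makes the switch legal; beyond that the argument is routine. If one prefers to bypass the transposition trick, a basis-free alternative proceeds by choosing a vector-space complement $W$ of $\rad(\esc{\cdot,\cdot}_0)$ in $V$, exploiting nondegeneracy of $\esc{\cdot,\cdot}_0\vert_W$ to define $T(x)\in W$ as the unique element representing the functional $w\mapsto\esc{x,w}_1$ on $W$, and then using $\rad(\esc{\cdot,\cdot}_0)\subset\rad(\esc{\cdot,\cdot}_1)$ to extend the identity across the radical component of any $y\in V$.
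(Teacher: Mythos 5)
Your proof is correct and follows the same route as the paper: encode the two forms as Gram matrices, translate the radical inclusion into kernel inclusion, invoke the cited factorization lemma, and use symmetry to transfer the auxiliary factor into the first slot of $\esc{\cdot,\cdot}_0$ (the paper does this via $\esc{x,y}_1=\esc{x,T(y)}_0 \Leftrightarrow \esc{x,y}_1=\esc{T(x),y}_0$, you via matrix transposes — cosmetically different, substantively the same). One small wrinkle worth noting: with $T:=P^t$ the equation $T^tA=B$ is literally $PA=B$, so the intermediate detour through $B=AP^t$ and the phrase ``equivalently $(P^t)^tA=B$'' is a slight muddle (those two matrix identities are not formally equivalent, though both happen to hold here); the conclusion is unaffected.
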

\begin{proof} Take a basis $B=\{v_i\}_{i=1}^n$ of $V$. Consider the matrices $(\esc{v_i,v_j}_0)_{i,j}$ and $(\esc{v_i,v_j}_1)_{i,j}$. Then 
$$ x (\esc{v_i,v_j}_0)_{i,j}=0 \Rightarrow x (\esc{v_i,v_j}_1)_{i,j}=0$$
for any row vector $x\in\K^n$. So there is an $n\times n$ matrix $P$ such that 
$(\esc{v_i,v_j}_1)_{i,j}=(\esc{v_i,v_j}_0)_{i,j} P$. If we write $P=(p_{ij})$
we have $$\esc{v_i,v_j}_1=\sum_k \esc{v_i,v_k}_0 p_{kj}=
\esc{v_i,\sum_k p_{kj}v_k}_0=\esc{v_i,T(v_j)}_0$$ where $T\colon V\to V$ is such that
$T(v_j)=\sum_k p_{kj}v_k$. Summarizing $\esc{x,y}_1=\esc{x,T(y)}_0$ which is equivalent to say $\esc{x,y}_1=\esc{T(x),y}_0$. 
\end{proof}
\medskip

In order to extend as far as possible the previous finite-dimensional result we will need to implement some fundamental topological ideas.

\medskip

Let $V$ be a vector space over a field $\K$ and the inner product $\esc{\cdot,\cdot} \colon V \times V \to \K$ such that $(V,V)$ is a dual pair.
We can consider the 
 topology whose basis of neighborhoods of an $x\in V$ is the given by the sets $x+\cap_{i=1}^n v_i^\bot$ for some finite collection $v_1,\ldots,v_n$ (see \cite[\S IV, section 6, Definition 2, p. 70]{Jacobson}). We will call this the $\esc{\cdot,\cdot}$-{\it topology} of $V$. 
 One can see that the closed subsets are those subspaces $U\subset V$ such that $U^{\bot\bot}=U$ (see \cite[\S IV, section 6, Proposition 1, p. 71]{Jacobson}). This can be applied to the ground field endowed with the inner product $\esc{\lambda,\mu}_{\K}:=\lambda \mu$ with $\lambda, \mu \in \K$. Of course the topology induced in $\K$ is the discrete topology.

\begin{definition} \rm
Let $X$ and $Y$ be topological spaces and $B\colon X\times X\to Y$ a map. We will say that $B$ is {\em partially continuous} if both maps $B(x,\_),B(\_,x)\colon X\to Y$ are continuous for every $x \in X$. 
\end{definition}

\begin{remark}\label{copioso}\rm 
Let $(V,\esc{\cdot,\cdot})$ be a nondegenerate inner product $\K$-vector space. Recall the definition
of the {\it topological dual} $V^*$, that is, all linear maps $V\to \K$ which are continuous considering in $V$ the $\esc{\cdot,\cdot}$-topology and the discrete one in $\K$. By  \cite[\S IV, section 7, Lemma, p.72]{Jacobson} each $f\in V^*$ is of the form $f=\esc{y,\_}$ for some $y\in V$. Furthermore, observe that $\esc{\cdot,\cdot}$ is partially continuous relative to itself. Indeed, for $x \in V$ let $f_x \colon V \to \K$ given by $f_x(v):=\esc{x,v}$. We will prove that $f_x$ is continuous finding an adjoint (see \cite[\S IV, section 7, Theorem 1, p. 72]{Jacobson}). So, write $(f_x)^{\sharp} \colon \K \to V$ defined linearly as $(f_x)^{\sharp}(1)=x$. For $\lambda \in \K$, we have $\esc{f_x(v),\l}_{\K}=\lambda f_x(v)= \lambda\esc{x,v}=\esc{(f_x)^{\sharp}(\lambda),v}$.
\end{remark}

\begin{proposition}\label{atupoi}
Let $V$ be an arbitrary $\K$-vector space provided with two inner products $\esc{\cdot,\cdot}_i$ ($i=0,1$). Assume that 
$\esc{\cdot,\cdot}_0$ is nondegenerate and endow $V$ with the 
$\esc{\cdot,\cdot}_0$-topology. Then:
\begin{enumerate}[label=(\roman*)]
    \item \label{atupoi1}The inner product  $\esc{\cdot,\cdot}_1$ is partially continuous if and only if there is a continuous linear map $T\colon V\to V$ such that $\esc{x,y}_1=\esc{T(x),y}_0$ for any $x,y\in V$.
    \item \label{atupoi2} If both inner products are \so , then $\esc{\cdot,\cdot}_1$ is partially continuous.
\end{enumerate}
\end{proposition}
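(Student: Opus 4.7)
The plan is to exploit the representation theorem encoded in Remark~\ref{copioso}: because $\esc{\cdot,\cdot}_0$ is nondegenerate and makes $(V,V)$ into a dual pair, every continuous linear functional on $V$ equipped with the $\esc{\cdot,\cdot}_0$-topology has the form $\esc{w,\cdot}_0$ for a unique $w\in V$, and that same remark asserts that $\esc{\cdot,\cdot}_0$ itself is partially continuous. These two ingredients are essentially everything I need.

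For part~\ref{atupoi1}, the ``if'' direction falls out at once: if $\esc{x,y}_1=\esc{T(x),y}_0$ with $T$ continuous, then for fixed $x$ the map $\esc{x,\cdot}_1=\esc{T(x),\cdot}_0$ is continuous by Remark~\ref{copioso}, and for fixed $y$ the map $\esc{\cdot,y}_1=\esc{\cdot,y}_0\circ T$ is continuous as a composition. Conversely, assume $\esc{\cdot,\cdot}_1$ is partially continuous. Each functional $\esc{x,\cdot}_1$ then lies in $V^{*}$, so the representation yields a unique vector, which I call $T(x)$, with $\esc{x,y}_1=\esc{T(x),y}_0$ for every $y$. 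Uniqueness together with bilinearity of both forms forces $T$ to be $\K$-linear. To see $T$ is continuous, it suffices to check that $T^{-1}(v^{\bot})$ is an open neighborhood of $0$ for each $v\in V$: using the symmetry of both inner products together with the defining identity for $T$,
\[ T^{-1}(v^{\bot})=\{x\in V\colon \esc{v,T(x)}_0=0\}=\{x\in V\colon \esc{v,x}_1=0\}, \]
which is the kernel of the continuous functional $\esc{v,\cdot}_1$, and hence open.

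For part~\ref{atupoi2}, take a basis $B=\{v_j\}_{j\in\Lambda}$ of $V$ that orthogonalizes both inner products simultaneously. Nondegeneracy of $\esc{\cdot,\cdot}_0$ forces $\esc{v_j,v_j}_0\ne 0$ for each $j$, since otherwise $v_j$ would be $\esc{\cdot,\cdot}_0$-orthogonal to every element of $B$ and so belong to $\rad(\esc{\cdot,\cdot}_0)=0$. Define $T\colon V\to V$ linearly by $T(v_j):=(\esc{v_j,v_j}_1/\esc{v_j,v_j}_0)\,v_j$; a direct check on basis pairs using bilinearity and the double orthogonality yields $\esc{x,y}_1=\esc{T(x),y}_0$ throughout $V$, whereupon the partial continuity of $\esc{\cdot,\cdot}_1$ follows either via part~\ref{atupoi1} or straight from Remark~\ref{copioso}.

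The one delicate point is the converse half of \ref{atupoi1}: constructing $T$ pointwise from the representation theorem is painless, whereas verifying its continuity requires routing the argument through the symmetry of both bilinear forms in order to convert the statement about $T$ into the partial-continuity hypothesis on $\esc{\cdot,\cdot}_1$ that is actually at one's disposal.
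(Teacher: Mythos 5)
Your proof is correct. The overall shape matches the paper's: part (ii) is identical in substance (construct $T$ on a common orthogonal basis by the ratio of diagonal entries and reduce to (i) or to Remark~\ref{copioso}), and in part (i) you build $T$ from the representation theorem exactly as the paper does. Where you diverge is in proving continuity of $T$ in the ``only if'' half of (i). The paper establishes that $T$ is \emph{self-adjoint} with respect to $\esc{\cdot,\cdot}_0$ (writing $\esc{T(x),y}_0=\esc{x,y}_1=\esc{y,x}_1=\esc{x,T(y)}_0$) and then appeals to the Jacobson theorem that in a dual pair a linear map possessing an adjoint is automatically continuous. You instead argue directly from the definition of the $\esc{\cdot,\cdot}_0$-topology: you show $T^{-1}(v^\bot)=\ker\esc{v,\cdot}_1$, which is open because $\esc{v,\cdot}_1$ is a continuous functional into the discrete field, so preimages of the basic neighborhoods $\cap_1^n v_i^\bot$ of $0$ are again neighborhoods of $0$ and linearity does the rest. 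Your route is somewhat more elementary, sidestepping the adjoint-continuity theorem; the paper's route has the side benefit of explicitly exhibiting the self-adjointness of $T$, a fact that is reused almost verbatim in the proof of Theorem~\ref{ogacem}\ref{ogacem1}. Both are valid; if you later need that the $T_i$'s are self-adjoint, your argument would need to be supplemented by that short symmetry computation.
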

\begin{proof}
If $\esc{\cdot,\cdot}_1$ is partially continuous, then for any $x\in V$ the linear map $f_x:=\esc{x,\_}_1$ is continuous. Whence by Remark \ref{copioso},
there is an unique element $a_x\in V$ verifying $f_x=\esc{a_x,\_}_0$. The uniqueness follows from nondegeneracy of  $\esc{\cdot,\cdot}_0$.
Defining $T\colon V\to V$ such that $T(x)=a_x$, we have 
\begin{equation}\label{reason}
\esc{x,\_}_1=\esc{T(x),\_}_0
\end{equation}
for any $x \in V$. It can be seen that $T$ is linear. Next we prove the continuity of $T$ or equivalently that it has an adjoint relative to $\esc{\cdot,\cdot}_0$. In fact, $T$ is self-adjoint:
$$\esc{T(x),y}_0=\esc{x,y}_1=\esc{y,x}_1=f_y(x)=\esc{a_y,x}_0=\esc{x,T(y)}_0.$$
Conversely, assume that the inner product $\esc{\cdot,\cdot}_1$ is written as 
$\esc{x,y}_1=\esc{T(x),y}_0$ for some continuous linear map $T\colon V\to V$ and arbitrary $x,y\in V$. In order to prove that 
$\esc{\cdot,\cdot}_1$ is partially continuous we need to check that for any $x\in V$ the map $f_x\colon V\to\K$ defined as $f_x(v):=\esc{x,v}_1$ is continuous. But this is equivalent again to prove that $f_x$ has an adjoint. Define the linear map 
$S\colon \K\to V$ such that $1\mapsto T(x)$. We check that this is the adjoint map of $f_x$:
$$\esc{f_x(v),\l}_{\K}=\l f_x(v)=\l \esc{x,v}_1=\l \esc{v,T(x)}_0=\esc{v,S(\l)}_0$$ for every $\l \in \K$ and $v \in V$, which proves that $f_x$ is continuous.\newline 
For the second item take an orthogonal basis $B=\{v_i\}_{i\in I}$ for both inner products. Since $\esc{v_i,v_i}_0=\l_i\ne 0$ denoting  $\m_i:=\esc{v_i,v_i}_1$ we can define the linear map $T\colon V\to V$ such that $T(v_i)=\frac{\m_i}{\l_i}v_i$ for any $i$. So, denoting $\delta_{ij}$ the Kronecker's delta, we have $$\esc{T(v_i),v_j}_0=\frac{\m_i}{\l_i}\esc{v_i,v_j}_0=\delta_{ij}\frac{\m_i}{\l_i}\l_i=\esc{v_i,v_j}_1$$
which gives $\esc{T(x),y}_0=\esc{x,y}_1$ for any $x,y\in V$. Furthermore $T$ is easily seen to be self-adjoint. Thus applying item \ref{atupoi1} we prove the statement.
\end{proof}

\subsection{Simultaneous orthogonalization of a nondegenerate family of inner products}\label{gallo}

In this subsection we approach the issue of finding necessary and sufficient conditions to the existence of a basis which simultaneously orthogonalizes a nondegenerate family of inner products. We start by defining a such nondegenerate family.

\begin{definition}\label{fuerade} \rm
Let $\F$ be a family of inner products in a vector space $V$ over $\K$. We will say that $\F$ is {\it nondegenerate} if there is some element in $\F$  whose radical is $0$, say $\esc{\cdot,\cdot}_0$,  such that any 
$\esc{\cdot,\cdot}\in\F$ is partially continuous relative to the $\esc{\cdot,\cdot}_0$-topology of $V$. On the contrary, we will refer to a {\it degenerate} family. 
\end{definition}

If a family $\F$ is nondegenerate we will use
the notation $\F=\{\esc{\cdot,\cdot}_i\}_{i\in I \cup \{0\}}$ to indicate that the specific inner product $\esc{\cdot,\cdot}_0$ is nondegenerate and the remaining are partially continuous relative to the $\esc{\cdot,\cdot}_0$-topology of $V$.

\begin{remark}\rm
If $V$ is finite-dimensional the condition on
the continuity is redundant so that $\F$ is nondegenerate if and only
if there is some nondegenerate inner product within $\F$. 
Regardless of the dimension of $V$, if $\F$ is \so\ the condition that any inner product in $\F$ is partially continuous relative to the 
$\esc{\cdot,\cdot}_0$-topology of $V$ 
is redundant in virtue of Proposition \ref{atupoi} \ref{atupoi2}.
\end{remark}

\begin{remark}\label{fragel}\rm If $V$ is arbitrarily dimensional and $\F=\{\escemp_i\}_{i\in I}$ is \so\ we can construct a new
 family $\F':=\F\sqcup\{\escemp_0\}$ by adding the inner product defined in the following way: take a basis $\{v_j\}$ which is orthogonal for $\F$ and write 
 $\esc{v_i,v_j}_0=\delta_{ij}$.
 The basis of the $v_j$'s orthogonalizes also to $\F'$.
 Furthermore the family $\F'$ is nondegenerate . 
 Thus a necessary condition for the simultaneous orthogonalization of a given family $\F$ is that by adding
 at most one element, we get a new family $\F'$ which is
 nondegenerate  and 
 \so. \end{remark}
 
 So the natural starting point for this study should be
 the nondegenerate families. In this sense we have the following result.

\begin{theorem}

\label{ogacem}
Assume that $\F=\{\esc{\cdot,\!\cdot}_i\}_{i\in I\cup\{0\}}$ is a nondegenerate family of inner products on the vector space $V$ of arbitrary dimension over $\K$. Then:
\begin{enumerate}[label=(\roman*)]
\item \label{ogacem1} For each $i\in I$
there is a linear map $T_i\colon V\to V$ such that $\esc{x,y}_i=\esc{T_i(x),y}_0$ \ for any $x,y\in V$. Furthermore, each $T_i$ is a self-adjoint operator of $(V,\esc{\cdot , \cdot}_0)$.
\item \label{ogacem2}$\F$ is \so\  if and only if there exists an orthogonal basis $B$ of $(V,\esc{\cdot , \cdot}_0)$ such that each $T_i$ (as in item \ref{ogacem1}) is diagonalizable relative to $B$.
\item \label{short} In case ${\ch}(\K)\ne 2$ and $\dim(V)$ is either finite or infinite countable, the family $\F$ is simultaneously orthogonalizable if and only if $\{T_i\}_{i\in I}$ (as in item \ref{ogacem1}) is \sd.
\item \label{short2} In case ${\ch}(\K)= 2$ and $\dim(V)$ is either finite or infinite countable, if  $\F$ is \so , then $\{T_i\}_{i\in I}$ (as in item \ref{ogacem1}) is simultaneously diagonalizable. Conversely, when $\{T_i\}_{i\in I}$ is simultaneously diagonalizable
we can consider the root space decomposition
$V=\oplus_\a V_\a$. If each $V_\a^{\circ}=\{x \in V_{\a} \colon \esc{x,x}_0=0\}$ satisfies that $V_\a^\circ$ is not $\bot$-closed or $V_\a/V_\a^\circ$ is
infinite-dimensional, then $\F$ is simultaneously orthogonalizable.
\end{enumerate}
\end{theorem}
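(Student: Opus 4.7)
Part (i) is immediate from Proposition \ref{atupoi}\ref{atupoi1}: nondegeneracy of $\F$ supplies, for each $i\in I$, a continuous linear $T_i\colon V\to V$ with $\esc{x,y}_i=\esc{T_i(x),y}_0$. Self-adjointness then follows from the symmetry of the two forms via the chain
$$\esc{T_i(x),y}_0=\esc{x,y}_i=\esc{y,x}_i=\esc{T_i(y),x}_0=\esc{x,T_i(y)}_0.$$
For part (ii), if $B=\{v_j\}$ is a simultaneous orthogonal basis for $\F$, then $B$ is $\esc{\cdot,\cdot}_0$-orthogonal, and nondegeneracy of $\esc{\cdot,\cdot}_0$ forces $\esc{v_l,v_l}_0\ne 0$ for every $l$ (otherwise $v_l$ would lie in the radical). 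Expanding $T_i(v_j)=\sum_k c_{jk}^{(i)}v_k$ and using $\esc{T_i(v_j),v_l}_0=\esc{v_j,v_l}_i=0$ for $l\ne j$ yields $T_i(v_j)\in\K v_j$, so $B$ diagonalizes each $T_i$. Conversely, any $\esc{\cdot,\cdot}_0$-orthogonal basis that diagonalizes all the $T_i$ orthogonalizes $\F$ by the direct calculation $\esc{v_j,v_k}_i=\esc{T_i(v_j),v_k}_0$.

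For part (iii), the forward implication is immediate from (ii). For the converse, the plan is to apply Proposition \ref{palmera} to the subspace $\K\{T_i\}_{i\in I}\subset\End(V)$: this yields a decomposition $V=\bigoplus_\a V_\a$ into joint root spaces, and since each $T_i$ is $\esc{\cdot,\cdot}_0$-self-adjoint by (i), part \ref{coco} of the same proposition makes the $V_\a$ pairwise $\esc{\cdot,\cdot}_0$-orthogonal. It then suffices to orthogonalize the restriction $\esc{\cdot,\cdot}_0\vert_{V_\a}$ within each root space: such a basis automatically diagonalizes every $T_i$ (all vectors in $V_\a$ are joint eigenvectors), and concatenating across $\a$ produces the basis required by (ii). In characteristic $\ne 2$ and at most countable dimension, this last step is classical Gram--Schmidt: by the polarization identity $2\esc{x,y}_0=\esc{x+y,x+y}_0-\esc{x,x}_0-\esc{y,y}_0$, either every vector of $V_\a$ is isotropic (so the form is zero and any basis works) or one splits off an anisotropic vector and iterates along a countable basis.

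For part (iv), the forward direction is once more (ii). For the converse in characteristic $2$, the root-space reduction above remains valid, and the problem again collapses to orthogonalizing the single form $\esc{\cdot,\cdot}_0\vert_{V_\a}$ on each $V_\a$. This is the main obstacle: polarization is unavailable in characteristic $2$, so there is no Gram--Schmidt and a symmetric form need not admit an orthogonal basis even in finite dimension. The stated alternatives ($V_\a^\circ$ not $\bot$-closed, or $V_\a/V_\a^\circ$ infinite-dimensional) are tailored to trigger a previously-established single-form orthogonalization result (in the spirit of \cite{CGMM}); once this produces an orthogonal basis inside each $V_\a$, these patch together across the root-space decomposition to orthogonalize $\F$. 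Thus the genuinely hard part is the characteristic-$2$ single-form orthogonalization inside each root space; the multi-form step is essentially packaged by (ii) plus Proposition \ref{palmera}.
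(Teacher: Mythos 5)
Your proof is correct and follows essentially the same route as the paper: parts (i) and (ii) via Proposition \ref{atupoi} and a direct computation, and parts (iii) and (iv) via the root-space decomposition of Proposition \ref{palmera} followed by a single-form orthogonalization inside each $V_\a$. Two small points worth flagging. First, in part (iii) the alternative ``every vector of $V_\a$ is isotropic, so the form is zero and any basis works'' cannot actually occur: the form restricted to $V_\a$ is nondegenerate (anything in its radical is $\esc{\cdot,\cdot}_0$-orthogonal to all root spaces and hence lies in $\rad(\esc{\cdot,\cdot}_0)=0$), so if it were alternate in characteristic $\ne 2$ it would be zero and force $V_\a=0$. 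The paper notes exactly this and uses it as the hypothesis for citing \cite[Chapter Two, Corollary 2, p.\,65]{gross}, which is the precise replacement for your ``classical Gram--Schmidt'': the naive iteration in countable dimension must be supplemented by the two-isotropic-vectors trick ($u\pm z$ with $\esc{u,z}_0\ne 0$, orthogonal and anisotropic when $\ch\K\ne 2$) together with a careful enumeration to guarantee a basis, and it is cleaner to invoke the quoted corollary. Second, in part (iv) the single-form result you want is again Gross (the same corollary, whose hypotheses in characteristic $2$ are exactly that $V_\a^\circ$ is not $\bot$-closed or $V_\a/V_\a^\circ$ is infinite-dimensional), not \cite{CGMM}. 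With those references straightened out the argument is complete.
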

\begin{proof}
For proving item \ref{ogacem1}, we proceed analogously to the proof of item \ref{atupoi1} of Proposition \ref{atupoi}. So we have the existence of self-adjoint maps
$T_i\colon V\to V$ such that $\esc{x,y}_i=\esc{T_i(x),y}_0$ for any
$x,y\in V$ and any $i \in I$.
Now we prove item \ref{ogacem2},  assume that $\F$ is simultaneously orthogonalizable. Let $B=\{v_j\}$ be a basis of $V$ with $\esc{v_j,v_k}_i=0$ for $j\ne k$ and any $i\in I\cup\{0\}$. 
If we write $T_i(v_j)=\sum_k a_{ij}^k v_k$ we have 
$$\esc{T_i(v_j)-a_{ij}^j v_j, v_k}_0=\esc{T_i(v_j), v_k}_0-a_{ij}^j\esc{ v_j, v_k}_0 = \esc{v_j, v_k}_i=0 \hbox{ if $k\ne j$, }$$
$$\esc{T_i(v_j)-a_{ij}^j v_j, v_j}_0= \sum_{q\ne j}a_{ij}^q\esc{v_q,v_j}_0=0.$$ And since $\esc{\cdot,\cdot}_0$ is nondegenerate, then
$T_i(v_j)\in \K v_j$ for arbitrary 
$i, j \in I$. Thus each self-adjoint operator $T_i$ is diagonalizable in the basis $B$. 
Reciprocally, assume that for any $i\in I$ we have that each $T_i$ is diagonalizable relative to a certain orthogonal basis $B=\{v_j\}$ with respect to $\esc{\cdot,\!\cdot}_0$. 

Thus $T_i(v_j)\in \K v_j$ and 
we can write $T_i(v_j)=a_{ij}v_j$ for some $a_{ij}\in \K$.
So $\F$ is simultaneously orthogonalizable in $B$ since for any $i,j,k$ with $j\ne k$ we have: 
$$\esc{v_j,v_k}_i=\esc{T_i(v_j),v_k}_0=a_{ij}\esc{v_j,v_k}_0=0.$$ To prove item \ref{short} we only need to show that if $\T:=\{T_i\}_{i\in I}$ is \sd\ then there is an orthogonal basis of $V$ relative to $\esc{\cdot,\cdot}_0$ which diagonalizes the family $\T$. For this purpose, applying Proposition \ref{palmera} we have that $V=\oPerp_{\alpha}^{\esc{\cdot,\cdot}_0}V_\a$ is an orthogonal direct sum of root spaces. Moreover, since 
$$\esc{x,y}_i=\esc{T_i(x),y}_0=\a(T_i)\esc{x,y}_0,$$ so if $\esc{x,y}_0=0 $ then $\esc{x,y}_i=0$ for every $i \in I$ and therefore $V=\operpf{\a}V_\a$. Now, if we can find an orthogonal basis (relative to $\esc{\cdot,\cdot}_0$) in each $V_\a$, joining together all those bases  we get an orthogonal basis of $V$ relative to $\esc{\cdot,\cdot}_0$ which diagonalizes $\T$. Note that the restriction of $\esc{\cdot,\cdot}_0$ to each $V_\a$ is not alternate since on the contrary we would have $\esc{x,x}_0=0$ for any $x\in V_\a$ hence $\esc{\cdot,\cdot}_0\vert_{V_\a}=0$ which would imply the contradiction $V_\a\subset\rad(\esc{\cdot,\cdot}_0)=0$. Hence applying \cite[Chapter Two, Corollary 2, p. 65]{gross},
we get that there are orthogonal bases $B_{\a}$ relative to $\esc{\cdot,\cdot}_0$  in each $V_\a$. Now, for any different elements $x, y \in B_\a$  we have $$\esc{x,y}_i
=\esc{T_i(x),y}_0=\a(T_i)\esc{x,y}_0=0.$$ Next if we consider $B=\sqcup_\a B_{\a}$, then $B$ is an orthogonal basis of $V$ relative to any inner product of the family $\F$. 

Finally we show item \ref{short2}. We apply item \ref{ogacem2} for proving that when $\F$ is \so , then $\{T_i\}_{i\in I}$ is \sd. Now assume that $\{T_i\}_{i\in I}$ is \sd. Applying Proposition \ref{palmera} and analogously to the proof of item \ref{short}  we have that $V=\oPerp_{\a}^{\F} V_\a$. So the existence of an orthogonal basis relative to all the inner products will follow by noting that each orthogonal summand $V_\a$ has such a basis. For this, apply \cite[Chapter Two, Corollary 2, p. 65]{gross} taking into account that for each root $\a$ either
$V_\a^{\circ}=\{x \in V_{\a} \colon \esc{x,x}_0=0\}$  is not $\perp$-closed or $V_\a/ V_\a^{\circ}$ is infinite-dimensional.
\end{proof}

From the proof of Theorem \ref{ogacem} we can deduce the following corollary.

\begin{corollary} Suppose $\F=\{\esc{\cdot,\!\cdot}_i\}_{i\in I\cup\{0\}}$ is a nondegenerate family of inner products in a vector space $V$ over a field $\K$. If $\F$ is \so, then $V=\operpf{\a} V_\a$  where $\a$'s are the roots. Moreover, for any $i$ and $\a$, we get $\esc{\cdot,\!\cdot}_i\vert_{V_\a}=c_{i,\a}\esc{\cdot,\!\cdot}_0\vert_{V_\a}$ for suitable $c_{i,\a}\in \K$ and each $\esc{\cdot,\!\cdot}_i$ can be represented in block diagonal form where each block is the matrix of  $c_{i,\a}\esc{\cdot,\!\cdot}_0\vert_{V_\a}$ relative to some basis of $V_\a$.
\end{corollary}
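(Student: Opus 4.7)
The plan is to combine the three ingredients produced in the proof of Theorem~\ref{ogacem}: the representation $\esc{x,y}_i=\esc{T_i(x),y}_0$ of each inner product via a self-adjoint operator, the simultaneous diagonalizability of $\{T_i\}_{i\in I}$ forced by $\F$ being \so{}, and the root space decomposition from Proposition~\ref{palmera}.

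First I would invoke Theorem~\ref{ogacem}\ref{ogacem1} to obtain, for each $i\in I$, a self-adjoint (with respect to $\esc{\cdot,\cdot}_0$) linear map $T_i\colon V\to V$ satisfying $\esc{x,y}_i=\esc{T_i(x),y}_0$ for all $x,y\in V$. The hypothesis that $\F$ is \so{} together with Theorem~\ref{ogacem}\ref{ogacem2} gives an orthogonal basis of $(V,\esc{\cdot,\cdot}_0)$ that simultaneously diagonalizes all the $T_i$. In particular, $\T:=\{T_i\}_{i\in I}$ is simultaneously diagonalizable, so by Proposition~\ref{palmera}(i) one gets the direct sum decomposition $V=\bigoplus_\a V_\a$ indexed by the roots of $\T$.

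Next I would upgrade this direct sum to an $\F$-orthogonal sum. For $\a\neq\b$ and any $x\in V_\a$, $y\in V_\b$, Proposition~\ref{palmera}\ref{coco} (applied to the self-adjoint family $\T$ in $(V,\esc{\cdot,\cdot}_0)$) gives $\esc{x,y}_0=0$. Then for every $i\in I$,
\[
\esc{x,y}_i=\esc{T_i(x),y}_0=\a(T_i)\,\esc{x,y}_0=0,
\]
so the distinct root spaces are orthogonal with respect to \emph{every} inner product in $\F$, i.e.\ $V=\operpf{\a}V_\a$, as required.

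Finally I would read off the shape of each $\esc{\cdot,\cdot}_i$ on a fixed root space. If $x,y\in V_\a$, then using $T_i(x)=\a(T_i)x$ one computes
\[
\esc{x,y}_i=\esc{T_i(x),y}_0=\a(T_i)\,\esc{x,y}_0,
\]
so setting $c_{i,\a}:=\a(T_i)\in\K$ yields $\esc{\cdot,\cdot}_i\vert_{V_\a}=c_{i,\a}\,\esc{\cdot,\cdot}_0\vert_{V_\a}$. Choosing any basis of $V$ obtained as a disjoint union $\sqcup_\a B_\a$ of bases of the root spaces (for instance, the simultaneously orthogonalizing basis already supplied by Theorem~\ref{ogacem}\ref{ogacem2}), the Gram matrix of $\esc{\cdot,\cdot}_i$ in this basis is block diagonal, the $\a$-block being the Gram matrix of $c_{i,\a}\esc{\cdot,\cdot}_0\vert_{V_\a}$ in $B_\a$. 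There is no real obstacle here: the work was done in Theorem~\ref{ogacem} and Proposition~\ref{palmera}, and the only subtle point to make explicit is that self-adjointness of the $T_i$ automatically promotes $\esc{\cdot,\cdot}_0$-orthogonality of root spaces to $\F$-orthogonality.
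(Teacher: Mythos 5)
Your proof is correct and follows precisely the route the paper intends: it deduces the corollary from Theorem~\ref{ogacem}\ref{ogacem1}--\ref{ogacem2} (representation via self-adjoint $T_i$ and their simultaneous diagonalizability) combined with Proposition~\ref{palmera}, exactly as the paper signals by the phrase ``From the proof of Theorem~\ref{ogacem} we can deduce.'' The key upgrade from $\esc{\cdot,\cdot}_0$-orthogonality to $\F$-orthogonality via $\esc{x,y}_i=\a(T_i)\esc{x,y}_0$ is the same computation that already appears in the proof of Theorem~\ref{ogacem}\ref{short}.
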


We can go even further as the next corollary shows.

\begin{corollary}
Let $\F=\{\esc{\cdot,\!\cdot}_i\}_{i\in I\cup\{0\}}$ be a nondegenerate family of inner products in a vector space $V$ over a field $\K$ such that ${\ch}(\K)\ne 2$ and $\dim(V)$ is either finite or infinite countable. Then the following statements are equivalent:
\begin{enumerate}[label=(\roman*)]
    \item The family $\F$ is simultaneously orthogonalizable.
\item There exists an orthogonal basis $B$ such that for every $i \in I$ the matrix of the product  $\esc{\cdot,\cdot}_i$ relative to $B$ is a diagonal matrix.  
\end{enumerate} 
\end{corollary}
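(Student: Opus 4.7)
The plan is to observe that this corollary is essentially a matrix-theoretic reformulation of the definition of simultaneous orthogonalizability, exploiting the fact that $\esc{\cdot,\cdot}_0$ is already a member of $\F$. Here ``orthogonal basis'' is understood in the sense native to this subsection, namely orthogonality with respect to the distinguished nondegenerate inner product $\esc{\cdot,\cdot}_0$; and the Gram matrix $(\esc{v_j,v_k}_i)_{j,k}$ of an inner product relative to $B$ is diagonal precisely when $\esc{v_j,v_k}_i=0$ for every $j\ne k$. So the content of both (i) and (ii) boils down to the existence of a single basis which is simultaneously orthogonal relative to all members of $\F$.

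For the implication (i) $\Rightarrow$ (ii), I would unwind the definition of simultaneous orthogonalizability (and invoke Theorem \ref{ogacem}\ref{short} to secure such a basis under the standing hypotheses) to obtain $B=\{v_j\}$ with $\esc{v_j,v_k}_i=0$ for all $j\ne k$ and all $i\in I\cup\{0\}$. Specializing to $i=0$ shows that $B$ is orthogonal with respect to $\esc{\cdot,\cdot}_0$, while for each $i\in I$ the vanishing of all off-diagonal entries is exactly the statement that the matrix of $\esc{\cdot,\cdot}_i$ in $B$ is diagonal.

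For the reverse implication (ii) $\Rightarrow$ (i), given an orthogonal basis $B=\{v_j\}$ whose Gram matrix under each $\esc{\cdot,\cdot}_i$ with $i\in I$ is diagonal, I would combine $\esc{v_j,v_k}_0=0$ (from orthogonality) with $\esc{v_j,v_k}_i=0$ (from diagonality) for $j\ne k$, thus obtaining simultaneous vanishing for the full index set $I\cup\{0\}$, which by definition yields simultaneous orthogonalizability of $\F$.

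No genuine obstacle arises: the hypotheses on $\mathrm{char}(\K)$ and on countability of $\dim(V)$ are inherited from the setting of Theorem \ref{ogacem}\ref{short}, but the equivalence itself is a purely formal unwinding of the definitions. The only subtlety worth noting is that orthogonality with respect to $\esc{\cdot,\cdot}_0$ automatically covers the $i=0$ component of the simultaneous orthogonalization, so that only the remaining $i\in I$ need be described in matrix-theoretic language.
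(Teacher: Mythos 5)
Your proof is correct, and your observation that the equivalence is a purely formal unwinding of definitions — not actually requiring the hypotheses on $\mathrm{char}(\K)$ or on the countability of $\dim(V)$ — is accurate. The paper, however, takes a genuinely different route in the forward direction: rather than directly reusing the basis supplied by the definition of simultaneous orthogonalizability, the authors build $B=\sqcup_\a B_\a$ from the root-space decomposition $V=\operpf{\a}V_\a$ established in the preceding corollary, choose $\esc{\cdot,\cdot}_0$-orthogonal bases $B_\a$ of each $V_\a$, and then compute that the Gram matrix $M_{i,B}$ equals $\mathrm{diag}\bigl(\a(T_i)M_{0,B_\a}:\a\in\Phi\bigr)$, with $\Phi$ as in Proposition~\ref{palmera}. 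This buys explicit structural information — the diagonal entries of each $M_{i,B}$ are the corresponding $\esc{\cdot,\cdot}_0$-diagonal entries scaled by the root value $\a(T_i)$, organized block-by-block by root — at the cost of invoking machinery the bare equivalence does not need. Your argument is cleaner and lays bare that the restrictions on characteristic and dimension, inherited from the context of Theorem~\ref{ogacem}~\ref{short}, are not logically used, though it forgoes the concrete description of the resulting diagonal matrices. One small caveat: your parenthetical invocation of Theorem~\ref{ogacem}~\ref{short} to ``secure such a basis'' in the forward direction is superfluous, since hypothesis (i) already asserts the existence of a simultaneously orthogonalizing basis by definition; that theorem would only be needed if (i) were replaced by the condition that the operators $\{T_i\}$ are simultaneously diagonalizable.
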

\begin{proof} Suppose that $\F$ is simultaneously orthogonalizable. Consider the basis $B= \sqcup_{\a} B_\a$ where $B_{\a}$ is the orthogonal basis of each $V_{\a}$. Let $M_{0,{B_\a}}$ 
be the matrix of the restriction of the product $\esc{\cdot,\cdot}_0$ to $V_\a$ relative to $B_{\a}$. Then, for each $i \in I$, we have 
$M_{i,B}:=M_B(\esc{\cdot,\cdot}_i)= {\rm {diag} }(\a(T_i)M_{0,{B_\a}}: \a \in \Phi)$, being $\Phi$ as in Proposition \ref{palmera}.
The converse follows immediately. 
\end{proof}

\subsection{Simultaneous orthogonalization of a degenerate family of inner products}
Note that a degenerate family $\F$ of inner products on $V$ is one such that either all the inner products in $\F$ are degenerate or if there is  one
nondegenerate inner product, then the remaining  inner products are not partially continuous relative to the topology induced by the first one.  We will focus on a family of inner products in which all their inner products are degenerate. This is because if we consider a nondegenerate inner product $\esc{\cdot,\cdot}_0$ and another one $\esc{\cdot,\cdot}_1$ which is not partially continuous relative to the $\esc{\cdot,\cdot}_0$-topology, then $\esc{\cdot,\cdot}_0$ and $\esc{\cdot,\cdot}_1$ cannot be simultaneously orthogonalizable in view of Proposition \ref{atupoi} item \ref{atupoi2}.

In this subsection we give a step ahead with relation to the previous one. We will assume that all the inner products in a given family $\F$ are degenerate but there is some element $\escemp_0\in\F$ whose radical is contained in the radical of the other members of $\F$. We will see that still we can recover much of the philosophy in the subsection \ref{gallo}. 

We will start by introducing some topological tools. If $X$ and $B$ are topological spaces and $A$ is a set, then any surjective map $f\colon A\to B$ is continuous relative to
the initial topology on $A$, that is, that whose closed sets are $f^{-1}(F)$ where $F$ ranges in the class of closed subsets of $B$. Moreover, for any other continuous map $\varphi\colon A\to X$ such that
for any $a,a'\in A$:
$$f(a)=f(a')\Rightarrow \varphi(a)=\varphi(a')$$
there is a unique continuous map $\theta\colon B\to X$ such that $\theta f=\varphi$. This is the abc of initial topologies but allows us to give an interesting topology on a $\K$-vector space $V$ endowed with an inner product. In fact, we have the following definition.

\begin{definition}\label{calabacin} \rm
Let $(V,\esc{\cdot,\cdot})$ be an inner product $\K$-vector space. Let $\r:=\rad(\esc{\cdot,\cdot})$ and define the inner product $\esc{\cdot,\cdot}_{\r}\colon V/\r\times V/\r\to\K$ as 
\begin{equation}\label{azorrac}
\esc{v+\r,w+\r}_{\r}:=\esc{v,w}  \text{ for } v,w \in V.
\end{equation}
Since $(V/\r,\esc{\cdot,\cdot}_{\r})$ is nondegenerate, we can consider the $\esc{\cdot,\cdot}_{\r}$-topology of $V/\r$ and consequently the initial topology on $V$ induced by the canonical projection $p\colon V\to V/\r$. We will call this topology the \emph{$\esc{\cdot,\cdot}$-topology} of $V$ by extension of the nondegenerate case.

Note that $\esc{\cdot,\cdot}$-topology of $V$ is the smallest topology such that $p$ is continuous.

\end{definition}

\begin{lemma}\label{tomate}
 Let $(V,\esc{\cdot,\cdot})$ be an inner product $\K$-vector space and topologize $V$ with the $\esc{\cdot,\cdot}$-topology. Then, $\esc{\cdot,\cdot}$ is partially continuous. 
 \end{lemma}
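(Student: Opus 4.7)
The plan is to reduce the partial continuity of $\esc{\cdot,\cdot}$ on $V$ to the partial continuity of $\esc{\cdot,\cdot}_{\r}$ on the nondegenerate quotient $V/\r$, via the canonical projection $p\colon V\to V/\r$.

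First I would observe that, by the very definition of the $\esc{\cdot,\cdot}$-topology as the initial topology induced by $p$ (Definition \ref{calabacin}), the projection $p$ is continuous when $V/\r$ is given its $\esc{\cdot,\cdot}_{\r}$-topology and $\K$ the discrete topology. Then I would invoke Remark \ref{copioso}: since $(V/\r,\esc{\cdot,\cdot}_{\r})$ is nondegenerate, the inner product $\esc{\cdot,\cdot}_{\r}$ is partially continuous, so for every $\bar{x}\in V/\r$ the linear functional $\esc{\bar{x},\_}_{\r}\colon V/\r\to\K$ is continuous (and likewise in the first variable, by symmetry).

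Now, fixing $x\in V$, the defining identity \eqref{azorrac} says precisely that
\[
\esc{x,v}=\esc{x+\r,v+\r}_{\r}=\bigl(\esc{x+\r,\_}_{\r}\circ p\bigr)(v)
\]
for every $v\in V$, so $\esc{x,\_}\colon V\to\K$ is a composition of two continuous maps, hence continuous. The argument for $\esc{\_,x}$ is identical by symmetry of the inner product. This proves partial continuity.

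I do not expect any real obstacle: the proof is essentially bookkeeping with the universal property of the initial topology plus Remark \ref{copioso}. The only subtle point to check is that the value $\esc{x+\r,v+\r}_{\r}$ is well-defined independently of representatives, which is built into \eqref{azorrac} since $\r=\rad(\esc{\cdot,\cdot})$ annihilates both slots; this ensures the factorization through $p$ is legitimate.
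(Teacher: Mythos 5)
Your proof is correct and follows essentially the same route as the paper: factor $\esc{x,\_}$ through the canonical projection $p\colon V\to V/\r$ onto the nondegenerate quotient, then use continuity of the induced functional on $V/\r$. The only stylistic difference is that you cite Remark \ref{copioso} for the partial continuity of $\esc{\cdot,\cdot}_{\r}$, whereas the paper re-derives it in place by explicitly exhibiting the adjoint $(\tilde{f}_{a+\r})^{\sharp}(1)=a+\r$; both amount to the same verification.
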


 \begin{proof}
Consider $a \in V$ and define $f_a: V \to \K$ as $f_a = \esc{a,\_}$ and $\tilde{f}_{a+\r}: V/\r \to \K$ as $\tilde{f}_{a+\r}(x+\r)=\esc{a+\r,x+\r}_{\r}=\esc{a,x}$ where $\r=\rad(\esc{\cdot,\cdot})$ . Observe that in fact $f_a$ is the composition $V \xrightarrow{p} V/\r \xrightarrow{\tilde{f}_{a+\r}} \K$. Hence in order to see that $f_a$ is continuous, it suffices to check that $\tilde{f}_{a+\r}$ is continuous. Since $V/\r$ is nondegenerate this is equivalent to find an adjoint map $(\tilde{f}_{a+\r})^{\sharp}: \K \to V/\r$. Define linearly $(\tilde{f}_{a+\r})^{\sharp}(1)=a+\r$, so for $x \in V$ and $\lambda \in \K$ we have:
$$ \esc{\tilde{f}_{a+\r}(x+\r),\lambda}_{\K}=\lambda \esc{a,x}=\esc{x,\lambda a}=\esc{x+\r_,\lambda (a+\r)}_{\r}=\esc{x+\r,(\tilde{f}_{a+\r})^{\sharp}(\lambda)}_{\r}.$$
\end{proof}

Before going on, consider an inner product $\K$-vector space $(V,\esc{\cdot,\cdot})$ with $\r=\rad{(\esc{\cdot,\cdot})}$. 
Then the quotient space $V/\r$ is nondegenerate relative to $\esc{\cdot,\cdot}_{\r}$, as defined in (\ref{azorrac}), and we fix our attention on it. If we choose a subspace $W$ of $V$ with $V=\r\oPerp W$, then there is a canonical vector space isomorphism $W\cong V/\r$. So we can consider the inner product $\K$-vector space $(W,\esc{\cdot,\cdot}\vert_W)$. 

If $(V_i,\esc{\cdot,\cdot}_i)_{i=1,2}$ are nondegenerate inner product spaces over the same field, a linear isomorphism $T\colon V_1\to V_2$ is said to be an {\em isometry} if $\esc{T(x),T(y)}_2=\esc{x,y}_1$ for arbitrary elements 
$x,y\in V_1$. If $T$ is an isometry, then $T$ is 
continuous (relative to the $\escemp_i$-topologies) being its adjoint $T^\sharp=T^{-1}$. Taking this into account, the above canonical isomorphism is an isometry $$(W,\esc{\cdot,\cdot}\vert_W)\cong (V/\r,\esc{\cdot,\cdot}_{\r}).$$
Note that the inverse of the above isometry is an isometry 
\begin{equation}\label{sartahcram}
\Omega\colon(V/\r,\esc{\cdot,\cdot}_{\r})\cong (W,\esc{\cdot,\cdot}\vert_W),\end{equation}
which will be useful in the sequel. 

\begin{lemma}\label{marcha} In the previous setting, 
the canonical inclusion $\ii \colon (W,\esc{\cdot,\cdot}\vert_W)\to V$ is continuous relative to the $\esc{\cdot,\cdot}$-topology of $V$. Moreover, the linear map $\ii \Omega: V/\r \to V$ is a continuous monomorfism.
\end{lemma}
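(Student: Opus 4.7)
The strategy is to reduce both assertions to the already-established principle that isometries between nondegenerate inner product spaces are continuous (with adjoint equal to their inverse), combined with the universal property of the initial topology.

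My first step would be to unpack Definition \ref{calabacin}: the $\esc{\cdot,\cdot}$-topology on $V$ is the initial topology induced by $p\colon V\to V/\r$. Consequently, by the universal property of initial topologies recalled at the beginning of this subsection, a linear map $g\colon X\to V$ from any topological space $X$ is continuous if and only if $p\circ g\colon X\to V/\r$ is continuous. This reduces the problem of showing $\ii\colon W\to V$ is continuous to showing that $p\circ\ii\colon W\to V/\r$ is continuous.

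Next I would identify $p\circ\ii$. By construction it sends $w\mapsto w+\r$, and this is precisely $\Omega^{-1}$, where $\Omega$ is the isometry in (\ref{sartahcram}). Before invoking this, I would check that $\esc{\cdot,\cdot}\vert_W$ is nondegenerate: if $w\in W$ satisfies $\esc{w,W}=0$, then since $\esc{w,\r}=0$ by the orthogonality of the decomposition $V=\r\oPerp W$, we get $w\in\rad(\esc{\cdot,\cdot})\cap W=\r\cap W=0$. With both $V/\r$ and $W$ nondegenerate, the observation just before (\ref{sartahcram}) applies: the isometry $\Omega^{-1}=p\circ\ii$ is continuous, with adjoint $\Omega$. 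By the universal property, $\ii$ is continuous.

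For the ``moreover'' clause, $\Omega\colon V/\r\to W$ is itself an isometry between nondegenerate spaces, hence continuous, so the composite $\ii\,\Omega\colon V/\r\to V$ is continuous as a composition of two continuous maps. Injectivity is immediate since $\Omega$ is a linear bijection and $\ii$ is an inclusion. The only non-routine step is the identification $p\circ\ii=\Omega^{-1}$; everything else is a direct unwinding of definitions, so I do not expect a real obstacle here.
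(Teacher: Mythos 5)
Your proposal is correct, but it takes a genuinely different route from the paper. The paper proves continuity of $\ii$ by hand: it first shows that the basic open neighborhoods of $0$ in $V$ are exactly the sets $p^{-1}\bigl(\cap_{1}^{n}(x_i+\r)^{\bot}\bigr)=\cap_{1}^{n}w_i^{\bot}$ (writing $x_i=r_i+w_i$ with $r_i\in\r$, $w_i\in W$), and then computes $\ii^{-1}\bigl(\cap_{1}^{n}w_i^{\bot}\bigr)=\cap_{1}^{n}w_i^{\bot_W}$ directly. You instead invoke the characteristic property of the initial topology (a map $g\colon X\to V$ is continuous if and only if $p\circ g$ is) and observe that $p\circ\ii=\Omega^{-1}$ is an isometry between nondegenerate spaces, hence continuous. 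Your argument is shorter and more structural; the paper's is more explicit and, as a side benefit, records the description of the neighborhoods of $0$ in $V$ as intersections $\cap_{1}^{n}w_i^{\bot}$ with $w_i\in W$, which is a clean picture of the $\esc{\cdot,\cdot}$-topology in the degenerate case. One small inaccuracy in attribution: you say the ``if and only if $p\circ g$ continuous'' criterion was ``recalled at the beginning of this subsection,'' but what the paper actually recalls there is the factorization property for continuous maps \emph{out of} $V$ through the surjection $p$ (a quotient-flavored statement); the property you use, while standard and correct, is a different facet of the universal property and is not explicitly stated in the text. This does not affect the validity of your argument.
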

\begin{proof}
Let $p\colon V\to (V/\r,\esc{\cdot,\cdot}_{\r})$ be the canonical projection. The open neighborhoods of $0$ in $V$ are of the form 
$p^{-1}(\cap_1^n (x_i+\r)^\bot)$ where $x_1,\dots,x_n$ is a finite collection of elements in $V$. For any $i$ write $x_i=r_i+w_i$ where $r_i\in\r$ and $w_i\in W$. Next we claim that  
$$p^{-1}(\cap_1^n (x_i+\r)^\bot)=\cap_1^n w_i^\bot \quad\text{ where } w_i^\bot=\{x\in V\colon \esc{x,w_i}=0\}.$$
If $v\in p^{-1}(\cap_1^n (x_i+\r)^\bot)$ then $p(v)\in \cap_1^n (x_i+\r)^\bot$ so that $\esc{p(v),p(x_i)}_{\r}=0$ for $i=1,\ldots,n$. But then formula \eqref{azorrac} gives $\esc{v,x_i}=0$ for any $i$ hence $\esc{v,w_i}=0$ so that 
$v\in\cap_i w_i^\bot$. The other inclusion is proved analogously. So far we have proved that the neighborhoods of $0$ in $V$ are of the form $\cap_1^n w_i^\bot$ where $w_1,\ldots,w_n$ is a finite collection of elements in $W$. Taking into account that 
$$\ii^{-1}(\cap_1^n w_i^\bot)=\cap_1^n w_i^{\bot_W}$$
where $w_i^{\bot_W}=\{x\in W\colon \esc{x,w_i}=0\}$, we get the continuity of $\ii$. Therefore, we have a continuous linear map  $V/\r\to V$ given by the composition $\ii\Omega$.
\end{proof}
We will call this map $\ii\Omega$ the \lq\lq backward gear\rq\rq\ from $V/\r$ to $V$.

\begin{proposition} \label{banana2} 
 Let $(V_i,\esc{\cdot,\cdot}_i)$ $(i=1,2)$ be two inner products $\K$-vector spacess  with $\r_i:=\rad(\esc{\cdot,\cdot}_i)$. We can write $V_i=\r_i \oPerp^{\esc{\cdot,\cdot}_i} W_i$ with $W_i$ a vector subspace of $V_i$. Let $T\colon V_1 \to V_2$ be a linear map such that $T(\r_1) \subset \r_2$ and $T(W_1) \subset W_2$. Then $T$ is continuous if and only if $T$ has adjoint.
 \end{proposition}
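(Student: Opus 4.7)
The approach is to reduce the statement to the nondegenerate quotients $V_i/\r_i$, where continuity and existence of an adjoint are already known to be equivalent. Since $T(\r_1)\subset\r_2$, the map $T$ descends to a well-defined linear map $\tilde T\colon V_1/\r_1\to V_2/\r_2$ via $v+\r_1\mapsto T(v)+\r_2$, and one has $p_2\circ T=\tilde T\circ p_1$, where $p_i\colon V_i\to V_i/\r_i$ is the canonical projection. Recalling from Definition \ref{calabacin} that the $\esc{\cdot,\cdot}_i$-topology on $V_i$ is the initial topology for $p_i$ (so its closed sets are exactly $p_i^{-1}(C)$ for $C$ closed in $V_i/\r_i$) and that $p_1$ is surjective, continuity of $T$ is equivalent to continuity of $\tilde T$. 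Since $(V_i/\r_i,\esc{\cdot,\cdot}_{\r_i})$ is a nondegenerate dual pair, \cite[\S IV, section 7, Theorem 1, p.~72]{Jacobson} then gives that continuity of $\tilde T$ is in turn equivalent to $\tilde T$ admitting an adjoint $S\colon V_2/\r_2\to V_1/\r_1$.

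It therefore suffices to prove that $T$ has an adjoint if and only if $\tilde T$ does. For the forward direction, if $T^\sharp\colon V_2\to V_1$ is an adjoint of $T$, then for every $r\in\r_2$ and every $x\in V_1$, $\esc{x,T^\sharp(r)}_1=\esc{T(x),r}_2=0$, hence $T^\sharp(\r_2)\subset\r_1$; consequently $T^\sharp$ descends to $\widetilde{T^\sharp}\colon V_2/\r_2\to V_1/\r_1$, and a routine check via (\ref{azorrac}) shows that $\widetilde{T^\sharp}$ is an adjoint of $\tilde T$.

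For the converse, given an adjoint $S$ of $\tilde T$, I would define $T^\sharp\colon V_2\to V_1$ by $T^\sharp:=\Omega_1\circ S\circ p_2$, where $\Omega_1\colon V_1/\r_1\to W_1$ is the isometry of (\ref{sartahcram}); note that $T^\sharp$ takes values in $W_1$. To verify $\esc{T(x),y}_2=\esc{x,T^\sharp(y)}_1$ for arbitrary $x\in V_1$ and $y\in V_2$, decompose $x=r_1+w_1$ with $r_1\in\r_1$, $w_1\in W_1$. The hypotheses $T(\r_1)\subset\r_2$ and $T(W_1)\subset W_2$, combined with the orthogonality of the two decompositions $V_i=\r_i\oPerp W_i$, yield $\esc{T(x),y}_2=\esc{T(w_1),y}_2$ on one side and $\esc{x,T^\sharp(y)}_1=\esc{w_1,T^\sharp(y)}_1$ on the other. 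Equality of the two then reduces, via $p_1|_{W_1}=\Omega_1^{-1}$ and the isometry property of $\Omega_1$ and $\Omega_2$, to the defining adjoint relation $\esc{\tilde T(u),v}_{\r_2}=\esc{u,S(v)}_{\r_1}$.

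The main obstacle is precisely this converse: continuity and existence of adjoint are transparently equivalent on the nondegenerate quotients, but lifting an adjoint from $V_2/\r_2$ back to the whole of $V_2$ requires both splittings $V_i=\r_i\oPerp W_i$ and the compatibility $T(W_1)\subset W_2$ in order to prevent the radical components from obstructing the adjoint identity. This is where the hypothesis $T(W_1)\subset W_2$ (which was not needed to descend $T$ to $\tilde T$) plays its essential role.
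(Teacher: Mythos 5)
Your proof is correct and follows essentially the same route as the paper's: pass to the nondegenerate quotients $V_i/\r_i$, invoke Jacobson's duality theorem there, and lift the adjoint back using the splittings $V_i=\r_i\oPerp W_i$ and the isometries $\Omega_i$. Your explicit use of the equivalence ``$T$ continuous iff $\tilde T$ continuous'' via the universal property of the initial topology and surjectivity of $p_1$ is in fact a tidier way to get the forward direction than the paper's claimed factorization $T=\ii_2\Omega_2 S p_1$, which agrees with $T$ only on $W_1$ (the composite annihilates $\r_1$), though the paper's continuity conclusion is of course still correct.
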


\begin{proof} Suppose that $T$ has an adjoint $T^{\sharp}$. The induced linear map  $S: V_1/\r_1 \to V_2/\r_2$ such that for $x \in V_1$, $S(x+\r_1)=T(x)+\r_2$ has an adjoint because for every $x \in V_1$, $y \in V_2$:
 $$\esc{S(x+\r_1),y+\r_2}_{\r_2}=\esc{T(x)+\r_2,y+\r_2}_{\r_2}=\esc{T(x),y}_2=$$ $$\esc{x,T^{\sharp}(y)}_1=\esc{x+\r_1,T^{\sharp}(y)+\r_1}_{\r_1}.$$
Then an adjoint of $S$ is $S^{\sharp} \colon V_2/\r_2 \to V_1/\r_1$ given by $S^{\sharp}(y+\r_2):=T^{\sharp}(y)+\r_1$. Thus $S$ is continuous.
 Next we prove that $T=\ii_2\Omega_2 S p_1$ where $\ii_2\Omega_2$ is defined as in Lemma \ref{marcha} on $V_2/\r_2$. Take an arbitrary $x\in V_1$, then 
$\ii_2\Omega_2 S p_1(x)=\ii_2\Omega_{2}(T(x)+\r_2)=\ii_2(z)$ for some $z\in W_2$ such that 
$z+\r_2=T(x)+\r_2$. Applying the hypothesis we have $z=T(x)$. Thus we have the commutativity of the following diagram
 \[
\begin{tikzcd}
 V_1/\r_1 \arrow[r,"S"]& V_2/\r_2 \arrow[d,"\ii_2\Omega_{2}"]\\
 V_1 \arrow[r,"T"'] \arrow[u,"p_1"]& V_2\\
\end{tikzcd}
\]

Observe that $T$ is continuous because $p_1, S$ and $\ii_2 \Omega_{2}$ are continuous (apply Lemma \ref{marcha}).

Conversely, assuming that $T$ is continuous, define $S:=p_2T \ii_1\Omega_{1}$ where $\ii_1\Omega_{1}$ is defined as in Lemma \ref{marcha} on $V_1/\r_1$.  Since $S$ is continuous there is an adjoint $S^\sharp\colon V_2/\r_2\to V_1/\r_1$ by \cite[\S IV, section 7, Theorem 1, p. 72]{Jacobson}). Let $x\in V_1$ and $y \in V_2$. If we write  $S^\sharp(y + \r_2)=z + \r_1$ for certain $z \in W_1$ then $\ii_1\Omega_1 (z+\r_1)=w \in W_1$ such that $w + \r_1=z + \r_1$. Now we will check that the map $\ii_1\Omega_{1}S^\sharp p_2\colon V_2\to V_1$ vanishing on $\r_2$ is an adjoint of $T$:
$$\esc{T(x),y}_2=\esc{T(x)+\r_2,y+\r_2}_{\r_2}=\esc{S(x+\r_1),y+\r_2}_{\r_2}=\esc{x+\r_1,S^\sharp(y+\r_2)}_{\r_1}=
$$
$$
\esc{x+\r_1,z+\r_1}_{\r_1}=\esc{x,z}_{1}=\esc{x,w}_{1}
=\esc{x, \ii_1\Omega_{1}S^\sharp(y+\r_2)}_1=\esc{x,\ii_1\Omega_1S^{\sharp}p_2 (y)}_{1}.$$
Therefore $T$ has an adjoint.
\end{proof}

Proposition \ref{banana2} is a generalization of the well-known principle asserting that in the context of nondegenerate spaces, a map with posses an adjoint map is automatically continuous (see \cite[\S IV, section 7, Theorem 1, p. 72]{Jacobson}).

\begin{remark}\label{gato}\rm  Assume that $\F$ is a family of inner products such that there exists $\esc{\cdot,\cdot}_0\in\F$ for which any element $\esc{\cdot,\cdot}\in\F$ is partially continuous relative to the $\esc{\cdot,\cdot}_0$-topology of $V$. Assume further that $\r_0:=\rad(\esc{\cdot,\cdot}_0)\subset \rad(\esc{\cdot,\cdot})$ for any $\esc{\cdot,\cdot}\in\F$. Consider next the new family $\F_{\r_0}$ of inner products $\esc{\cdot,\cdot}'$ on $V/\r_0$
defined as $\esc{x+\r_0,y+\r_0}':=\esc{x,y}$ for any $\esc{\cdot,\cdot}\in\F$. Then each element
$\esc{\cdot,\cdot}'$ is partially continuous relative to
$\esc{\cdot,\cdot}_{\r_{0}}$. To prove this, fix $x\in V$ and consider the map $g\colon V/\r_0\to\K$ such that $g(y +\r_0):=\esc{x+\r_0,y+\r_0}'$.
We have to prove that $g$ is continuous (more precisely $\esc{\cdot,\cdot}_{\r_{0}}$-continuous). But the map $f\colon V\to\K$ given by $f(y):=\esc{x,y}$ is continuous (that is, $\esc{\cdot,\cdot}_0$-continuous), and as $g$ is the composition $g=f\ii\Omega$ whence the continuity of $g$.
\[
\begin{tikzcd}[column sep=tiny, row sep=small]
V/\r_0\arrow[rr,"g"]\arrow[dr,"\ii\Omega"'] & &\K\\
& V\arrow[ur,"f"'] & 
\end{tikzcd}
\]
This allows to conclude that the new family $\F_{\r_0}$ has a nondegenerate element 
$\esc{\cdot,\cdot}_{\r_{0}}$ and any other $\esc{\cdot,\cdot}'\in\F_{\r_0}$ is partially continuous relative to $\esc{\cdot,\cdot}_{\r_{0}}$. 
\end{remark}

\begin{proposition}\label{berenjena}
Let $V$ be a $\K$-vector space endowed with two inner products $\esc{\cdot,\cdot}_i$ ($i=0,1$) such that $\rad(\esc{\cdot,\cdot}_0)\subset\rad(\esc{\cdot,\cdot}_1)$. Then the following assertions are equivalent:
\begin{enumerate}[label=(\roman*)]
    \item $\esc{\cdot,\cdot}_1$ is partially continuous relative to the $\esc{\cdot,\cdot}_0$-topology of $V$.
    \item \label{ber} There is a continuous linear map $T\colon V\to V$ (relative to the $\esc{\cdot,\cdot}_0$-topology) vanishing on $\rad(\esc{\cdot,\cdot}_0)$  such that $\esc{x,y}_1=\esc{T(x),y}_0$ for any $x,y\in V$.
\end{enumerate}

\rm\bigskip
\end{proposition}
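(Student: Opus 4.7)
The plan is to reduce to the nondegenerate case (Proposition \ref{atupoi}) by passing to the quotient $V/\r_0$ where $\r_0 := \rad(\esc{\cdot,\cdot}_0)$, and then lift the map obtained on the quotient back to $V$ via the backward gear of Lemma \ref{marcha}.

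For the direction (i)$\Rightarrow$(ii), I would first invoke Remark \ref{gato}: since $\r_0 \subset \rad(\esc{\cdot,\cdot}_1)$, the inner product $\esc{\cdot,\cdot}_1$ descends to a well-defined inner product $\esc{\cdot,\cdot}_1'$ on $V/\r_0$, and the partial continuity of $\esc{\cdot,\cdot}_1$ relative to the $\esc{\cdot,\cdot}_0$-topology of $V$ transfers to partial continuity of $\esc{\cdot,\cdot}_1'$ relative to the $\esc{\cdot,\cdot}_{\r_0}$-topology of $V/\r_0$. Since $(V/\r_0,\esc{\cdot,\cdot}_{\r_0})$ is nondegenerate, Proposition \ref{atupoi}\,\ref{atupoi1} yields a continuous self-adjoint linear map $S\colon V/\r_0\to V/\r_0$ with $\esc{u,v}_1'=\esc{S(u),v}_{\r_0}$ for all $u,v\in V/\r_0$. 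Defining $T:=\ii\Omega\,S\,p\colon V\to V$, where $p\colon V\to V/\r_0$ is the canonical projection and $\ii\Omega$ the backward gear of Lemma \ref{marcha}, I obtain a continuous linear map (as a composition of continuous maps) that vanishes on $\r_0$ (since $p$ does).

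To verify the bilinear formula, given $x,y\in V$ set $w:=\ii\Omega S p(x)\in W$, where $W$ is the fixed complement from Definition \ref{calabacin}. By construction of $\Omega$ one has $w+\r_0=S(x+\r_0)$, and then using the definition of $\esc{\cdot,\cdot}_{\r_0}$ and $\esc{\cdot,\cdot}_1'$:
\[
\esc{T(x),y}_0=\esc{w,y}_0=\esc{w+\r_0,y+\r_0}_{\r_0}=\esc{S(x+\r_0),y+\r_0}_{\r_0}=\esc{x+\r_0,y+\r_0}_1'=\esc{x,y}_1,
\]
which is the desired identity.

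For the converse (ii)$\Rightarrow$(i), the argument is immediate: for each $x\in V$ the map $f_x\colon V\to\K$, $f_x(v):=\esc{x,v}_1=\esc{T(x),v}_0$, is the composition of $\esc{T(x),\_}_0$ with the identity, and $\esc{T(x),\_}_0$ is continuous by Lemma \ref{tomate}. Symmetry of the inner product then gives continuity of $\esc{\_,x}_1$ as well. The main obstacle in the whole argument is making sure the lifting via $\ii\Omega$ produces a map whose induced bilinear form really coincides with $\esc{\cdot,\cdot}_1$ rather than something equal to it only modulo $\r_0$; this is handled cleanly because $\esc{\cdot,\cdot}_0$ vanishes on $\r_0$ in both entries, so the value $\esc{w,y}_0$ is exactly the value $\esc{w+\r_0,y+\r_0}_{\r_0}$ via formula \eqref{azorrac}.
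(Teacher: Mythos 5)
Your argument is correct and matches the paper's proof essentially step for step: pass to $V/\r_0$, transfer partial continuity via Remark \ref{gato}, apply Proposition \ref{atupoi}\ref{atupoi1} to obtain $S$ on the quotient, lift by $T:=\ii\Omega Sp$ and verify the bilinear identity, with Lemma \ref{tomate} handling the converse. The only cosmetic difference is that the paper re-derives the continuity of $\esc{x+\r_0,\_}'_1$ directly from the universal property of the initial topology rather than invoking Remark \ref{gato}, but the underlying content is identical.
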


\begin{proof}
Consider $\r_0=\rad(\esc{\cdot,\cdot}_0)$,  $\r_1=\rad(\esc{\cdot,\cdot}_1)$, the inner product $\esc{\cdot,\cdot}_{\r_{0}}$ as \eqref{azorrac}, the $\esc{\cdot,\cdot}_{\r_{0}}$-topology of $V/\r_0$ and the decomposition $V=\r_0\oPerp^{\esc{\cdot,\cdot}_0} W$ for certain subspace $W$ of $V$. Let $p\colon V\to V/\r_0$ that gives the initial topology on $V$. Define now the inner product $\esc{\cdot,\cdot}_1'\colon V/\r_0\times V/\r_0 \to \K$ as $\esc{x+\r_0,y+\r_0}_1':=\esc{x,y}_1$. It is well defined because $\r_0\subseteq\r_1$.
As $\esc{\cdot,\cdot}_1$ is partially continuous then for any $x\in V$ the linear map $\esc{x,\_}_1\colon V\to\K$ is continuous.  Since $\r_0\subset\ker(\esc{x,\_}_1)$
there is a unique linear map $\esc{x+\r_0,\_}'_1\colon V/\r_0\to\K$ such that the diagram below is commutative
\[
\begin{tikzcd}
V \arrow[r,"p"]\arrow[dr,"\esc{x,\_}_1"']& V/\r_0\arrow[d, dashed, "\esc{x+\r_0,\_}'_1"]\\
   & \K\\
\end{tikzcd}
\]
and the continuity of $\esc{x+\r_0,\_}'_1$ is automatic from the universal property.

Now applying Proposition \ref{atupoi} item \ref{atupoi1} there exists a continuous linear map $S: V/\r_0 \to V/\r_0$ (continuity relative to the $\esc{\cdot,\cdot}_{\r_{0}}$-topology of $V/\r_0$), such that $\esc{x+\r_0,\_}'_1=\esc{S(x+\r_0),\_}_{\r_{0}}$. Define $T\colon V\to V$ such that $T=\ii\Omega Sp$ having the following commutative diagram:

\[
\begin{tikzcd}
 V/\r_0 \arrow[r,"S"]& V/\r_0\arrow[d,"\ii\Omega"]\\
 V \arrow[r,"T"] \arrow[u,"p"]& V\\
\end{tikzcd}
\]

So $T(x):= \ii \Omega(S(x+\r_0))$. Observe that $T$ is linear and continuous because $p, S$ and $\ii \Omega$ are linear and continuous (apply Lemma \ref{marcha}). In fact, if we write $S(x+\r_0) = w + \r_0$ for a unique $w \in W$, then we have that $S(x+\r_0)=p(w)$ and $T(x)=w$. Moreover for $x,z \in V$ we get
$$
\esc{x,z}_1\!=\!\esc{x+\r_0, z+\r_0}'_1\!=\!\esc{S(x+\r_0), z+\r_0}_{\r_{0}}=\! \esc{p(w), z+\r_0}_{\r_{0}}=\!\esc{w,z}_0\!=\esc{T(x),z}_0.
$$
For the converse, (ii) implies (i) by Lemma \ref{tomate}. \end{proof}

\begin{remark}\label{cachondo} \rm
Observe that the map $T$ defined in the proof of Proposition \ref{berenjena} satisfies $T(V)\subset W$.
\end{remark}

\begin{theorem} \label{oepem}
Let $\F=\{\esc{\cdot,\cdot}_i\}_{i\in I\cup \{0\}}$ be a family of inner products in the $\K$-vector space $V$ such that $\r_0:=\rad(\esc{\cdot,\cdot}_0)\subseteq \rad(\esc{\cdot,\cdot}_i)$ for any $i\in I$ and each $\esc{\cdot,\cdot}_i$ is partially continuous relative to the $\esc{\cdot,\cdot}_0$-topology of $V$. Then:
\begin{enumerate}[label=(\roman*)]
    \item\label{oepem1} 
$\F$ is \so\  if and only there exist an orthogonal basis $B=\{v_j\}$ of $(V,\esc{\cdot , \cdot}_0)$ and a collection of $\esc{\cdot,\cdot}_0$-self-adjoint continuous linear maps $T_i\colon V \to V$ for every $i \in I \cup \{0\}$ such that $\{T_i\}$ is \sd\ and for any $i$ we have $T_i(\r_0)=0$, $\esc{x,y}_i=\esc{T_i(x),y}_0$ for any $x,y\in V$. 
\item \label{oepem2} In case ${\ch}(\K)\ne 2$ and $\dim(V)$ is either finite or infinite countable, the family $\F$ is simultaneously orthogonalizable if and only if $\{T_i\}_{i\in I}$ (as in item \ref{oepem1}) is \sd.
\end{enumerate}
\end{theorem}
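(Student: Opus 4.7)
The plan is to reduce to the nondegenerate case already handled by Theorem~\ref{ogacem}, by descending to the quotient $V/\r_0$ and exploiting the isometry $\Omega\colon(V/\r_0,\esc{\cdot,\cdot}_{\r_0})\to(W,\esc{\cdot,\cdot}_0|_W)$ from~\eqref{sartahcram}, where $V=\r_0\oPerp^{\esc{\cdot,\cdot}_0}W$ is a fixed orthogonal decomposition. By Remark~\ref{gato}, the induced family $\F_{\r_0}=\{\esc{\cdot,\cdot}_i'\}_{i\in I\cup\{0\}}$ given by $\esc{x+\r_0,y+\r_0}_i':=\esc{x,y}_i$ is nondegenerate in the sense of Definition~\ref{fuerade}, so Theorem~\ref{ogacem} will be available on $V/\r_0$.

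The first main step will be to construct the operators $T_i$. I will apply Theorem~\ref{ogacem} \ref{ogacem1} to $\F_{\r_0}$ to obtain $\esc{\cdot,\cdot}_{\r_0}$-self-adjoint continuous operators $T_i'\colon V/\r_0\to V/\r_0$ with $\esc{\xi,\eta}_i'=\esc{T_i'(\xi),\eta}_{\r_0}$, and then lift each to $T_i:=\ii\Omega T_i'p\colon V\to V$. By construction, $T_i(\r_0)=0$ and $T_i(W)\subseteq W$. Unraveling $\Omega$ together with the inclusion $\r_0\subseteq\rad(\esc{\cdot,\cdot}_i)$ will yield $\esc{T_i(x),y}_0=\esc{x,y}_i$, from which $\esc{\cdot,\cdot}_0$-self-adjointness is immediate by the symmetry of the inner products. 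Continuity of $T_i$ relative to the $\esc{\cdot,\cdot}_0$-topology then follows from Proposition~\ref{banana2}, since $T_i$ is its own adjoint.

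For the forward direction of~\ref{oepem1}, given a simultaneous orthogonal basis $B$ of $\F$, I will split $B=B_0\sqcup B_W$ with $B_0$ a basis of $\r_0$ and choose $W:=\span(B_W)$. Then $p(B_W)$ is an $\esc{\cdot,\cdot}_{\r_0}$-orthogonal basis of $V/\r_0$ that orthogonalizes $\F_{\r_0}$, so Theorem~\ref{ogacem} \ref{ogacem2} on $V/\r_0$ will force each $T_i'$ to be diagonal in $p(B_W)$; transporting through $\Omega$ then makes each $T_i$ diagonal in $B$ (with eigenvalue $0$ on $B_0$). The converse will be immediate: if $B$ is $\esc{\cdot,\cdot}_0$-orthogonal and diagonalizes every $T_i$, then for $j\ne k$ one has $\esc{v_j,v_k}_i=\esc{T_i(v_j),v_k}_0=\lambda_{i,j}\esc{v_j,v_k}_0=0$.

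For~\ref{oepem2}, the forward direction is contained in~\ref{oepem1}. Conversely, if $\{T_i\}$ is \sd\ on $V$ then the induced family $\{T_i'\}$ is \sd\ on $V/\r_0$ (each $T_i$ vanishes on $\r_0$ and preserves $W$), and Theorem~\ref{ogacem} \ref{short} applied to $\F_{\r_0}$ (whose dimension is still at most countable) will provide a simultaneous orthogonal basis $B'$ of $V/\r_0$; then $B_0\sqcup\Omega(B')$, for any basis $B_0$ of $\r_0$, will orthogonalize $\F$ on $V$, the cross terms with $B_0$ vanishing because $\r_0\subseteq\rad(\esc{\cdot,\cdot}_i)$. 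The main technical nuisance across both parts is coordinating a basis of $V$ with a basis of $V/\r_0$ through $\Omega$; once the complement $W$ is tailored to contain the non-radical portion of whichever basis is in play, the quotient--lift correspondence handles the rest.
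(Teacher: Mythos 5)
Your proposal is correct and follows essentially the same route as the paper: descend to $V/\r_0$, invoke Theorem~\ref{ogacem} on the induced nondegenerate family $\F_{\r_0}$ (via Remark~\ref{gato}), and lift through the backward gear $\ii\Omega$, with the converse directions handled by the same direct computations. You are slightly more explicit than the paper in tailoring the complement $W=\span(B_W)$ to the given simultaneous orthogonal basis $B$ (so that $T_i$ is diagonal on all of $B$, not just on the $W$-part), but this is a clarification rather than a different method.
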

\begin{proof}

In order to prove item \ref{oepem1} suppose first that $\F$ is \so. Then there exists a basis $B=\{v_j\}_{j \in J}$ such that $\esc{v_j, v_k}_i=0$ for $i \in I \cup \{0\}$ and $j\ne k$. Consider $\F_{\r_0}=\{\esc{\cdot , \cdot}'_i\}_{i \in I \cup\{0\}}$ where $\esc{\cdot,\cdot}'_i$ is the inner product defined on the quotient $V/\r_0$ by 
$\esc{x+\r_0, y +\r_0}'_i:=\esc{x,y}_i$ (note that $\esc{\cdot,\cdot}'_0=\esc{\cdot, \cdot}_{\r_{0}}$). Observe that since $\F$ is \so, then $\F_{\r_0}$ is \so. By Remark \ref{gato} and Proposition \ref{berenjena} item \ref{ber} we have a collection of continuous linear maps $T_i \colon V \to V$ (relative to $\esc{\cdot , \cdot}'_0$) vanishing on $\rad(\esc{\cdot,\cdot}_0)$  such that $\esc{x,y}_i=\esc{T_i(x),y}_0$ for any $x,y\in V$ and $i \in I$. Moreover, we know that $T_i= \ii \Omega S_i p$ where
$S_i \colon V/\r_0 \to V/\r_0$  verifies $\esc{x+\r_0, y +\r_0}'_i=\esc{S_i(x+\r_0),y+\r_0}'_0$, $V \xrightarrow{p} V/\r_0$ and $V/\r_0 \xrightarrow{\ii \Omega} V$ as in the proof of Proposition \ref{berenjena}. Since $\F_{\r_0}$ is \sd\ by Theorem \ref{ogacem} item \ref{ogacem2} $S_i$ is simultaneously diagonalizable. So for $j \in J$ there exist $\lambda_{ij} \in \K$ such that $S_i(v_j + \r_0)=\lambda_{ij}(v_j+\r_0)$. Now we prove that $T_i$ is \sd. Indeed, take $v_j \in W$ we have that $T_i(v_j)=\ii \Omega S_i p(v_j)= \ii \Omega S_i (v_j + \r_0)= \ii \Omega (\lambda_{ij}(v_j +\r_0))=\lambda_{ij} \ii \Omega(v_j+\r_0)= \lambda_{ij} \ii(v_j)=\lambda_{ij} v_j$.

For the converse, suppose that there exists a basis $B=\{v_j\}$ such that $\esc{v_j,v_k}_0=0$ if $j \neq k$ and $T_i(v_j)=a_{ij}v_j$ with $a_{ij} \in \K$. Now, for every $i \in I$ we get $\esc{v_j,v_k}_i=\esc{T_i(v_j),v_k}_0=a_{ij}\esc{v_j,v_k}_0=0$ if $j \neq k$. 

Next we prove \ref{oepem2}. The nontrivial implication is as follows. Assume that $\{T_i\}_{i\in I}$ is \sd. Consider $\{S_i\}_{i\in I}$ given by $S_i \colon V/\r_0 \to V/\r_0$ defined as $S_i(x+\r_0)=T_i(x) + \r_0$. Since $\ch(\K) \neq 2$ and $\dim(V/\r_0)$ is either finite or infinite countable then, applying Theorem \ref{ogacem} item \ref{short}, we get that $\{\esc{\cdot,\cdot}'_i\}_{i \in I}$ is \so. Consequently we have a basis $\{x_j + \r_0\}_{j \in J}$ of $V/\r_0$ that orthogonalizes simultaneously. Let $\ii \Omega \colon V/\r_0 \to V$ be as in Lemma \ref{marcha}, that is, for $x \in V$ we have $\ii \Omega(x+\r_0)=w$ with $x+\r_0=w+\r_0$ where $w \in W$ and $V= W \oPerp \r_0$. Suppose $\ii \Omega (x_j + \r_0)=t_j$. Observe that $t_j \in W$ for every $j \in J$. Next, $\{t_j\}_{j \in J}$ is a basis of $W$ because $\Omega$ is an isomorphism. For our purposes, consider $B= \{t_j\}_{j \in J} \sqcup \{r_k\}_{k \in K}$ where  $\{r_k\}$ is a basis of $\r_0$. For $j \neq k$, we finally have
$$\esc{t_j,t_k}_i=\esc{t_j+\r_0,t_k+\r_0}'_i=\esc{x_j+\r_0,x_k+\r_0}'_i=0,$$
providing thus a basis $B$ of $V$ which simultaneously orthogonalizes the family $\F$.
\end{proof}

\begin{remark}\label{guanga}\rm
The $T_i$'s defined in the proof of Theorem \ref{oepem} item \ref{oepem1} may be chosen in such a way that $T_i(V)\subset W$ for any $i$.
\end{remark}

Analogously to the nondegenerate case we can conclude:

\begin{corollary} Suppose that $\F=\{\esc{\cdot,\!\cdot}_i\}_{i\in I\cup\{0\}}$ is a family of inner products in a vector space $V$ over a field $\K$ such that $\r_0:=\rad(\esc{\cdot,\cdot}_0)\subseteq \rad(\esc{\cdot,\cdot}_i)$ for any $i\in I$ and each $\esc{\cdot,\cdot}_i$ is partially continuous relative to the $\esc{\cdot,\cdot}_0$-topology of $V$. If $\F$ is \so, then $V=\operpf{\a} V_\a$ where $\a$'s are the roots for the family of endomorphisms $\{T_i\}$ given by Theorem $\ref{oepem}$ item $\ref{oepem1}$.  Moreover, for any $i$ and $\a$, we get $\esc{\cdot,\!\cdot}_i\vert_{V_\a}=c_{i,\a}\esc{\cdot,\!\cdot}_0\vert_{V_\a}$ for suitable $c_{i,\a}\in \K$ and each $\esc{\cdot,\!\cdot}_i$ can be represented in block diagonal form where each block is the matrix of  $c_{i,\a}\esc{\cdot,\!\cdot}_0\vert_{V_\a}$ relative to some basis of $V_\a$.
\end{corollary}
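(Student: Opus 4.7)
The plan is to bootstrap from Theorem~\ref{oepem}~\ref{oepem1}: since $\F$ is simultaneously orthogonalizable, we obtain a family $\T=\{T_i\}_{i\in I\cup\{0\}}$ of continuous $\esc{\cdot,\cdot}_0$-self-adjoint endomorphisms of $V$, vanishing on $\r_0$, simultaneously diagonalizable on the orthogonal basis $B$, and satisfying $\esc{x,y}_i=\esc{T_i(x),y}_0$ for every $x,y\in V$ and every $i$. Applying Proposition~\ref{palmera}~(i) to $\T$ (after replacing $\T$ by $\K\T$, which is harmless in view of the first lemma of Section~\ref{gatito}), I obtain the algebraic root-space decomposition $V=\bigoplus_{\a}V_\a$ where $\a$ ranges over the roots of $\T$.

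The next step is to upgrade this algebraic decomposition to an $\F$-orthogonal one. I would copy the argument of Proposition~\ref{palmera}~\ref{coco} almost verbatim: for distinct roots $\a\ne\b$ there exists some $T_i\in\T$ with $\a(T_i)\ne\b(T_i)$, and then for $x\in V_\a$, $y\in V_\b$ the self-adjointness of $T_i$ relative to $\esc{\cdot,\cdot}_0$ yields
\[
\a(T_i)\esc{x,y}_0=\esc{T_i(x),y}_0=\esc{x,T_i(y)}_0=\b(T_i)\esc{x,y}_0,
\]
so $\esc{x,y}_0=0$. For any other $j\in I$, $\esc{x,y}_j=\esc{T_j(x),y}_0=\a(T_j)\esc{x,y}_0=0$ as well. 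Thus $V_\a\sf V_\b$ for every $\escemp\in\F$, giving $V=\operpf{\a}V_\a$.

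For the moreover part, I would simply restrict to a fixed $V_\a$: for $x,y\in V_\a$ one has $T_i(x)=\a(T_i)x$, hence
\[
\esc{x,y}_i=\esc{T_i(x),y}_0=\a(T_i)\esc{x,y}_0,
\]
so the constant $c_{i,\a}:=\a(T_i)$ works (with the convention $c_{0,\a}=1$). For the block-diagonal form, I would pick a basis $B_\a$ of each $V_\a$ and assemble $B=\sqcup_\a B_\a$: the cross blocks vanish by the orthogonality just established, and each diagonal block is the matrix of the proportional form $c_{i,\a}\esc{\cdot,\cdot}_0\vert_{V_\a}$ relative to $B_\a$, yielding $M_B(\esc{\cdot,\cdot}_i)=\mathrm{diag}(c_{i,\a} M_{0,B_\a}:\a)$.

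I do not foresee a serious obstacle: the only subtle point is checking that distinct roots of $\T$ are separated by some single $T_i$ rather than by a linear combination, but this is automatic from the definition of a root as a map on $\T$ itself. A minor bookkeeping issue is that $\r_0$ is contained in the zero-root space $V_0$ (because every $T_i$ kills $\r_0$), which is harmless since $\esc{\cdot,\cdot}_0\vert_{\r_0}=0$ just makes the corresponding block a zero block, consistent with $c_{i,0}\esc{\cdot,\cdot}_0\vert_{V_0}$ on that part.
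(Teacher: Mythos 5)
Your proof is correct and follows essentially the same route as the paper: obtain the root-space decomposition $V=\bigoplus_\a V_\a$ from Theorem~\ref{oepem}~\ref{oepem1} together with Proposition~\ref{palmera}, then use the relation $\esc{x,y}_i=\a(T_i)\esc{x,y}_0$ on $V_\a$ both to derive the $\F$-orthogonality and to read off the proportionality constants $c_{i,\a}=\a(T_i)$. The one place where your argument is cleaner is the orthogonality step: the paper first establishes $V_\a=(V_\a\cap\r_0)\operpf{\a}(V_\a\cap W)$ using $T_i(V)\subset W$ (Remark~\ref{guanga}) and only then checks that the $W$-components of distinct root spaces are $\F$-orthogonal, whereas your direct self-adjointness computation $\a(T_i)\esc{x,y}_0=\esc{T_i(x),y}_0=\esc{x,T_i(y)}_0=\b(T_i)\esc{x,y}_0$ works on all of $V_\a\times V_\b$ and renders that auxiliary split of each $V_\a$ unnecessary for the corollary.
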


\begin{proof}
First we can write $V=W \oPerp \r_0$ for certain subspace $W \subset V$. We know that there exists a family of endomorphisms $\{T_{i}\}_{i \in I}$ in the conditions of Theorem \ref{oepem} item \ref{oepem1}. By Proposition \ref{palmera} we have  $V=\bigoplus_{\a \in \Phi} V_{\a}$. Next we prove that for each $\alpha$, $V_{\alpha} = (V_{\a} \cap \r_0) \oplus (V_{\a} \cap W)$. Let $x=w+r \in V_{\a}$ where $w \in W, \, r \in \r_0$. By Remark \ref{guanga}, $T_i(V)\subset W $ for $i \in I$. Furthermore
$T_i(x)=T_i(w)$ since $T_i(r)=0$. But $T_i(x)=\a(T_i)x=\a(T_i)w+\a(T_i)r=\a(T_i)w$ which means $T_i(w)=\a(T_i)w$ that is, $w \in V_{\a} \cap W$. Consequently $r=x-w \in V_{\a} \cap \r_0$, so $V_{\alpha} \subseteq (V_{\a} \cap \r_0) \oplus (V_{\a} \cap W)$. The  other containment is trivial. Moreover, $V_{\alpha} = (V_{\a} \cap \r_0) \operpf{\a} (V_{\a} \cap W)$  because $\rad(\esc{\cdot,\cdot}_0)\subseteq \rad(\esc{\cdot,\cdot}_i)$. 
Secondly, we check $V_{\a} \perp^{\F} V_{\beta}$ for $\a \neq \beta$. Indeed it suffices to prove that $(V_{\a} \cap W)  \perp^{\F} (V_{\beta} \cap W)$. For this aim, take $x \in V_{\a} \cap W$ and $y \in V_{\beta} \cap W$: $\esc{x,y}_i=\esc{T_i(x),y}_0=\a(T_i)\esc{x,y}_0$ and $\esc{x,y}_i=\esc{y,x}_i=\esc{T_i(y),x}_0=\beta(T_i)\esc{x,y}_0$, so then $\a(T_i)\esc{x,y}_0=\beta(T_i)\esc{x,y}_0$. Since $\a \neq \beta$, there exists $T_k$ such that $\a(T_k)\neq \beta(T_k)$, whence $\esc{x,y}_0=0$ as desired. Finally, as $\esc{x,y}_i=\esc{T_i(x),y}_0=\a(T_i)\esc{x,y}_0$ for any $x,\, y \in V_{\a}$ then $\esc{\cdot,\!\cdot}_i\vert_{V_\a}=c_{i,\a}\esc{\cdot,\!\cdot}_0\vert_{V_\a}$ with $c_{i,\a}=\a(T_i)$. Taking this into account each $\esc{\cdot,\!\cdot}_i$ can be represented in block diagonal form where each block is the matrix of  $c_{i,\a}\esc{\cdot,\!\cdot}_0\vert_{V_\a}$ relative to some basis of $V_\a$.
\end{proof}

\section{Different constructions of a new family of inner products with a nondegenerate element}

We have seen in Remark \ref{fragel} that a necessary condition for a family of inner products $\F$ on $V$ to be \so\ is that we can add (at most) one inner product so as to get a new family $\F'$ which is nondegenerate. Furthermore if $B$ is a basis of $V$ orthogonalizing
$\F$, the new family $\F'$ can be constructed in such a way that the same basis $B$ orthogonalizes it.

In this section we consider several constructions whose goal is to enlarge a given family $\F$ of inner products so as to get a new family $\F'$ whose simultaneous orthogonalization is essentially the same as that of $\F$ and
such that $\F'$ is better behaved than the original $\F$. The optimum than we can expect is to get a nondegenerate inner product in $\F'$ while possibly all the inner products in $\F$ are degenerate. Also we investigate some properties that occur automatically in the finite-dimensional case, but that deserve certain attention in the infinite-dimensional case.

\subsection{Construction by adding a linear combination.}

Under certain conditions, it is possible to construct a new family containing a nondegenerate inner product from a given one (in which possibly all the inner products are degenerate). Moreover, if the initial family is \so, then the new one is also \so.

\begin{definition}\rm  Let $\F=\{\esc{\cdot,\cdot}_i\}_{i \in I}$ be a family of inner products in the $\K$-vector space $V$. We define the {\it radical of the family} $\F$ by $\rad(\F):=\cap_{i\in I}\rad(\esc{\cdot,\cdot}_i)$. For any $F\in\PF(I)$ we define the subspace $$R_F:=\cap_{i\in F}\rad(\esc{\cdot,\cdot}_i).$$
We will say that $\F$ satisfies the {\em descending chain condition on radicals} if it satisfies the descending chain condition on subspaces $R_F$. More precisely: for any sequence $F_1, F_2,\ldots $ of finite subsets of $I$
such that $R_{F_1}\supset R_{F_2}\supset\cdots\supset R_{F_k}\supset\cdots$,
there is some $n$ such that $R_{F_n}=R_{F_m}$ for any $m\ge n$.
\end{definition}

It is easy to see that $\F$ satisfies the descending chain condition if and only if any collection of sets $R_F$ has a minimum element (relative to the inclusion). Of course, if $V$ happens to be finite-dimensional automatically $\F$ satisfies the descending chain condition on radicals.
An example of $\F$ which does not satisfy the descending chain conditions on radicals is $\F=\{\esc{\cdot,\cdot}_i\}_{i\in\N^*}$ where $V$ is a  countably-dimensional $\K$-vector space and there is an (simultaneous) orthogonal basis $\{v_j\}$ such that
for any $i$ we have $\esc{\cdot,\cdot}_i$ given by 
$\esc{v_j,v_j}_i=1$ for $1\le j\le i$ and 
$\esc{v_j,v_j}_i=0$ for $j>i$. This $\F$ has  $\rad(\F)=0$.

\medskip
Although the following result is well-known in the literature,  we give here a slightly more general version than the one we can find in \cite[\S X, section 14, Proposition 3, p. 248]{Jacobson}.

\begin{lemma}\label{retorta}
Let $V$ be a $\K$-vector space of finite dimension.
Let $J$ be a set of cardinal $\omega$ and $\K$ be a field of cardinal $>\omega$. Let $\{l_i\}_{i\in J}$ be a family of proper subspaces of $V$ indexed by $J$. Then $V\not\subset \cup_{i\in J}l_i$. 
\end{lemma}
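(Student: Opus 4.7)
The plan is to prove the contrapositive by induction on $n = \dim V$, after one preliminary reduction. Every proper subspace of $V$ (with $\dim V \geq 1$) is contained in some hyperplane, so we may replace each $l_i$ by a hyperplane $H_i \supset l_i$; if this enlarged family fails to cover $V$, then neither does the original. It therefore suffices to treat the case in which every $l_i$ is a hyperplane of $V$.

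For the base cases: when $\dim V = 0$ there are no proper subspaces, and when $\dim V = 1$ the only proper subspace is $\{0\}$, which clearly cannot cover $V$. So assume $n \geq 2$ and that the lemma has been established in all strictly smaller dimensions.

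The core of the argument is a counting step. The hyperplanes of $V$ are parametrized by the points of $\mathbb{P}(V^*)$, and for $n \geq 2$ we have $|\mathbb{P}(V^*)| \geq |\K| > \omega$. Since $|J| = \omega$, we may pick a hyperplane $H \subset V$ distinct from every $l_i$. Two distinct hyperplanes $H, l_i$ satisfy $H + l_i = V$, so $H \cap l_i$ has codimension one in $H$ and is in particular a proper subspace of $H$. Applying the inductive hypothesis to $H$ (of dimension $n-1$) and the family $\{H \cap l_i\}_{i \in J}$ (indexed by the same set of cardinal $\omega$) produces a vector $v \in H$ lying in no $H \cap l_i$, hence in no $l_i$ at all. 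This contradicts $V \subset \cup_{i \in J} l_i$ and closes the induction.

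The only non-formal ingredient is the counting bound $|\mathbb{P}(V^*)| > \omega$, which rests directly on the hypothesis $|\K| > \omega$; the rest of the argument is routine dimension bookkeeping and the observation that intersecting with a suitable hyperplane drops the problem into a strictly smaller vector space while preserving the cardinality of the index set.
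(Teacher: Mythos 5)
Your proof is correct and follows essentially the same route as the paper's: induction on $\dim V$, a cardinality argument to pick a hyperplane $H$ distinct from all members of the family (possible since the hyperplanes of $V$ outnumber $\omega$ when $|\K|>\omega$), and then intersecting with $H$ to drop into dimension $n-1$ with the same index set. The only cosmetic difference is your preliminary reduction replacing each $l_i$ by an enclosing hyperplane; the paper skips this and instead observes directly that $h\cap l_i$ is proper in $h$ because $h\subset l_i$ would force $h=l_i$ by dimension count — but this is a minor variation, not a different argument.
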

\begin{proof}
Let $n$ be the dimension of $V$. For $n=1$ this is trivial. Suppose that the assertion is true for any $k<n$.
So any subspace of dimension $<n$ is not a union (indexed by $J$) of proper subspaces of $V$. Assume on the contrary that $V$ is a $\K$-vector space of dimension $n$ and that it is an union of proper subspaces $V\subset \cup_{i\in J}l_i$.  Let $L=\{l_i\}_{i\in J}$. Let $h$ be a hyperplane not in $L$. It exists because $\K$ has cardinal  $>\omega$ and $L$ is a collection whose cardinal is $\leq \omega$ (so $V$ contains many more hyperplanes). Then $h=h\cap V \subset \cup_{i\in J} (h\cap l_i)$. But for any $i$ we have $h\cap l_i$ is a proper subspace of $h$ (on the contrary $h\subset l_i$ and this would imply $h=l_i$). So we have that $h$ (which has dimension $n-1$) is a union (indexed by $J$) of proper subspaces. This is a contradiction.
\end{proof}

Note that if $J$ has finite cardinal $\o$ and $\rm{card}(\K)>\o$, Lemma \ref{retorta} applies. It also applies if $J$ is countable and $\K$ uncountable.

\begin{corollary}\label{krop}
Let $\K$ be a field, $n\in \N$, $n\ge 1$
and $L_n=\{(a_{1i},\ldots, a_{ni})\in\K^n \colon i\in J \}$ with $\rm{card}(\K) > \rm{card}(J)$. Then there are elements 
$x_1,\ldots,x_n\in\K$ such that for any $i$ we have $x_1 a_{1i}+\cdots+x_n a_{ni}\ne 0$.
\end{corollary}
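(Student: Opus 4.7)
The plan is to reduce this immediately to Lemma \ref{retorta} by viewing each row of $L_n$ as determining a hyperplane of $\K^n$, and then observing that no such finite collection of hyperplanes covers $\K^n$ when $\operatorname{card}(\K) > \operatorname{card}(J)$.

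More precisely, for each $i \in J$ I would consider the linear functional $f_i \colon \K^n \to \K$ given by $f_i(x_1, \ldots, x_n) = x_1 a_{1i} + \cdots + x_n a_{ni}$. If some $i$ has $(a_{1i}, \ldots, a_{ni}) = 0$, then $f_i \equiv 0$ and the conclusion of the corollary cannot hold, so the statement must be read with the implicit assumption that every tuple $(a_{1i}, \ldots, a_{ni})$ is nonzero; under this assumption each $f_i$ is a nonzero linear form and its kernel $l_i := \ker(f_i)$ is a hyperplane of $\K^n$, in particular a proper subspace.

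Next I would apply Lemma \ref{retorta} to the family $\{l_i\}_{i\in J}$ of proper subspaces of the finite-dimensional space $\K^n$. The cardinality hypothesis $\operatorname{card}(\K) > \operatorname{card}(J)$ is exactly what the lemma requires (as noted explicitly in the remark following its proof, covering both the case when $J$ is finite and the case when $J$ is countable with $\K$ uncountable). The lemma therefore gives $\K^n \not\subset \bigcup_{i\in J} l_i$, so there exists $(x_1, \ldots, x_n) \in \K^n$ lying outside every $l_i$, which is precisely the conclusion $x_1 a_{1i} + \cdots + x_n a_{ni} \neq 0$ for all $i \in J$.

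There is essentially no obstacle here: the corollary is a direct reformulation of Lemma \ref{retorta} in the language of linear forms rather than subspaces. The only minor point worth mentioning in the write-up is the implicit exclusion of zero tuples from $L_n$, which is needed for the hyperplane $l_i$ to be proper.
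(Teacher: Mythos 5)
Your proof is correct and follows essentially the same route as the paper's: both reduce the statement to Lemma \ref{retorta} by interpreting the conditions $x_1 a_{1i}+\cdots+x_n a_{ni}=0$ as membership in hyperplanes of $\K^n$ (the paper phrases it contrapositively, you phrase it directly, but the content is identical). Your observation about implicitly excluding zero tuples is a genuine and useful clarification that the paper glosses over — if some $(a_{1i},\ldots,a_{ni})=0$, the claimed conclusion is vacuously false and the corresponding ``hyperplane'' is all of $\K^n$, so Lemma \ref{retorta} would not apply; pointing this out strengthens the write-up.
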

\begin{proof}
Assume the contrary, so we can write that for any $(x_1,\ldots,x_n)\in\K^n$ we have
$a_{1j}x_1+\cdots+a_{nj}x_n=0$  for some $j$. 
In geometrical terms this means that $\K^n$ is contained in a union of hyperplanes indexed by $J$ which is not possible by Lemma \ref{retorta}.
\end{proof}

\begin{proposition}
Let $V$ be a $\K$-vector space and
 $\F=\{\esc{\cdot,\cdot}_i\}_{i\in I}$ be a \so\ family of inner products of $V$ satisfying the descending chain condition on radicals. Then 
there is a finite subset $F\subset I$ such that \begin{equation}\label{merenguito}  
\bigcap_{i\in F}\rad(\esc{\cdot,\cdot}_i)=\rad(\F).
\end{equation}
Furthermore if $\rad(\F)=0$ and $\rm{card}(\K) > \dim(V)$, there exists a linear combination  of elements of $\F$ which is nondegenerate.
\end{proposition}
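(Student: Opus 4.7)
The plan is to handle the two assertions separately, with the first serving as the preparation that fixes the finite index set used in the second.

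For the first assertion, I would use the equivalent formulation of DCC on radicals mentioned just before the statement: the family $\{R_F : F \in \PF(I)\}$ admits a minimum element, say $R_{F_0}$. To see that $R_{F_0} = \rad(\F)$, note that for every $i \in I$ the finite set $F_0 \cup \{i\}$ satisfies
$$R_{F_0 \cup \{i\}} = R_{F_0} \cap \rad(\esc{\cdot,\cdot}_i) \subseteq R_{F_0},$$
and minimality of $R_{F_0}$ forces equality. Hence $R_{F_0} \subseteq \rad(\esc{\cdot,\cdot}_i)$ for every $i \in I$, so $R_{F_0} \subseteq \rad(\F)$; the reverse containment is immediate, and $F = F_0$ does the job.

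For the second assertion, assume $\rad(\F) = 0$ and $\mathrm{card}(\K) > \dim(V)$, and take $F = \{i_1,\ldots,i_n\}$ from the first part, so that $\bigcap_{k=1}^n \rad(\esc{\cdot,\cdot}_{i_k}) = 0$. Using that $\F$ is \so, fix a simultaneous orthogonal basis $B = \{e_j\}_{j\in\Lambda}$ and set $a_{kj} := \esc{e_j,e_j}_{i_k}$. The technical crux is to show that for every $j \in \Lambda$ the tuple $(a_{1j},\ldots,a_{nj}) \in \K^n$ is nonzero: by orthogonality of $B$, the condition $a_{kj}=0$ is equivalent to $e_j \in \rad(\esc{\cdot,\cdot}_{i_k})$, so if the entire tuple vanished, then $e_j \in \bigcap_k \rad(\esc{\cdot,\cdot}_{i_k}) = 0$, a contradiction.

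For any $\lambda = (\lambda_1,\ldots,\lambda_n) \in \K^n$, the combination $\esc{\cdot,\cdot}_\lambda := \sum_{k=1}^n \lambda_k \esc{\cdot,\cdot}_{i_k}$ is again diagonalized by $B$ with diagonal entries $b_j(\lambda) := \sum_k \lambda_k a_{kj}$, so it is nondegenerate precisely when no $b_j$ vanishes at $\lambda$. The problem thus reduces to avoiding a family of $\mathrm{card}(\Lambda) = \dim V$ proper hyperplanes in $\K^n$, which is exactly what Corollary \ref{krop} provides under the hypothesis $\mathrm{card}(\K) > \dim(V)$. I expect the main obstacle is simply spotting this reduction: once one recognizes that nondegeneracy of a linear combination can be tested on the common orthogonal basis via the diagonal entries, the rest is the minimum-element argument of the first part plus a direct invocation of Corollary \ref{krop}.
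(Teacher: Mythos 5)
Your proof is correct and follows essentially the same path as the paper's: both use the minimum element $R_{F_0}$ guaranteed by the DCC for the first assertion, and both reduce the second assertion to Corollary \ref{krop} applied to the diagonal entries of the $\esc{\cdot,\cdot}_{i_k}$ in a common orthogonal basis. The one place you go beyond the paper's write-up is the verification that each tuple $(a_{1j},\ldots,a_{nj})$ is nonzero (equivalently, that no basis vector $e_j$ lies in $\bigcap_k \rad\esc{\cdot,\cdot}_{i_k}$); this is a genuine hypothesis needed for Corollary \ref{krop}, since a vanishing tuple corresponds to the improper ``hyperplane'' $\K^n$ and the union-of-proper-subspaces lemma no longer applies, and the paper's proof passes over this check silently. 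Your restructuring of the minimality argument via $F_0\cup\{i\}$ is also a slightly cleaner way to extract $R_{F_0}\subseteq\rad\esc{\cdot,\cdot}_i$ for every $i$ than the paper's chain of inclusions, though both are fine.
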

\begin{proof}
If $\rm{card}(I)$ is finite the equality \eqref{merenguito} holds trivially.
So assume that $I$ is an infinite set. Denote $J=\PF(I)$ and consider the collection
$\{R_F\}_{F\in J}$ which by our hypothesis has a minimum element $R_{F_0}$.
On the one hand we have $\rad(\F)\subset R_{F_0}$. On the other hand 
$\forall F\in J$ we have  $R_F\supset R_{F_0}$. Thus 
$$R_{F_0}\supset\rad(\F) \supset \bigcap_{F\in J} R_F\supset R_{F_0}.$$
Now we prove the second part. For some set of inner products (which can be chosen linearly independent) we have $0=\rad(\esc{\cdot,\cdot}_1)\cap\cdots\cap\rad(\esc{\cdot,\cdot}_n)$ (reordering if necessary). If the simultaneous orthogonal basis is $\{v_k\}$ and $\esc{v_k,v_k}_j=a_{jk}$.
The matrix of $\esc{\cdot,\cdot}_j$ is $\text{diag}(a_{ji})_{i \in H}$ where $\rm{card}(H)= \dim(V)$, then applying Corollary \ref{krop}
there are scalars $x_1,\ldots, x_n\in\K$ such that 
$x_1\esc{\cdot,\cdot}_1+\cdots+x_n\esc{\cdot,\cdot}_n$ is nondegenerate.
\end{proof}

Note that in the hypothesis of the previous proposition we can construct a new family $\F'=\F\cup\{x_1\esc{\cdot,\cdot}_1+\cdots+x_n\esc{\cdot,\cdot}_n\}$ which contains a nondegenerate element and induces the same simultaneous orthogonalization as $\F$. Furthermore the new family is obtained by adding at most one element.
Moreover, $n$ can be chosen minimum.

\subsection{Scalar extension of inner products}\label{periquito}
The aim of this subsection is to study the behaviour of nondegenerate families of
inner products under scalar extension.

If $\esc{\cdot,\cdot}\colon V\times V\to\K$ is an inner product and $\FC\supset\K$ a field extension, then we can define on $V_\FC:= V \otimes \FC$ an inner product 
$\esc{\cdot,\cdot}_\FC\colon V_\FC\times V_\FC\to\FC$ such that 
$\esc{v\otimes\l,w\otimes\m}_{\FC}=\l\m\esc{v,w}$. If we take a (possibly infinite) basis $B=\{v_i\}$ of $V$, then $B\otimes 1:=\{v_i\otimes 1\}$ is a basis of $V_\FC$ as $\FC$-vector space. In the particular case in which $V$ is finite-dimensional, 
the matrix  of $\esc{\cdot,\cdot}_\FC$ relative to the basis $B\otimes 1$ coincides 
with the matrix of $\esc{\cdot,\cdot}$ (relative to $B$) up to the canonical identification of $V$ with $V \otimes \K$.
Consequently 
\begin{equation}\label{pepino}
\rad(\esc{\cdot,\cdot}_{\FC}) \cong \rad(\esc{\cdot,\cdot})\otimes\FC
\end{equation}

\noindent
and in particular nondegeneracy of $\esc{\cdot,\cdot}$ implies that of $\esc{\cdot,\cdot}_\FC$. We can prove the above isomorphism  for arbitrary $V$, taking a basis $\{v_i\}$ of $V$ and a basis $\{f_j\}$ of the $\K$-vector space $\FC$. If $\esc{z,V}=0$ then automatically $\esc{z\oo 1,V_\FC}_{\FC}=0$ so that the map
$\rad(\esc{\cdot,\cdot})\otimes\FC\to V_\FC$ such that $z\oo 1\mapsto z\oo1$ is a monomorphism with image contained in $\rad(\esc{\cdot,\cdot}_\FC)$. To prove that the image of the above map is $\rad(\esc{\cdot,\cdot})_\FC$, let
 $c_{ij}\in\K$ be scalars and take an element $z=\sum_{ij}c_{ij}v_i\otimes f_j\in V_\FC$ such that $\esc{z,V_\FC}_{\FC}=0$. Then $\esc{z,u\otimes 1}_\FC=0$ for any $u\in V$. Thus $0=\sum_{ij}f_jc_{ij}\esc{v_i,u} $ which implies $\sum_i c_{ij}\esc{v_i,u}=0$ for any $j$ and $u \in V$. Whence $\sum_i c_{ij}v_i\in \rad(\esc{\cdot,\cdot})$ for any $j$. 
 Defining $z_j:=\sum_i c_{ij}v_i\in \rad(\esc{\cdot,\cdot})$
 we have $z=\sum_j z_j\oo f_j$ hence $z\in\rad(\esc{\cdot,\cdot})\otimes\FC$.
 
\begin{definition}\rm
 Consider a field extension $\FC\supset\K$ and a linear map $T\colon V_1\to V_2$ with $V_1$ and $V_2$ two $\K$-vector spaces. We will denote by $T_\FC:=T\oo 1$ the linear map $V_{1\FC}\to V_{2\FC}$ such that $T_\FC(v\oo\lambda)=T(v)\oo\lambda$ for any $v\in V_1$ and $\lambda\in\FC$. 
\end{definition}
 
Observe that if $T$ has an adjoint $S$ then $T_\FC$ has also an adjoint $S_\FC$.
 
 \begin{remark}\label{tapita}
\rm If $V_i$ is provided with
 a nondegenerate inner product $\esc{\cdot,\cdot}_i$ $(i=1,2)$ and $T$ is a continuous linear map 
 $T\colon V_1\to V_2$ (relative to the $\esc{\cdot,\cdot}_i$-topology of $V_i$ for $i=1,2$), then $T_\FC$ is also continuous (relative to the $\esc{\cdot,\cdot}_{i\FC}$-topology of $V_{i\FC}$ for $i=1,2$) since continuity in this case is characterized by the existence of an unique adjoint and we have $(T_\FC)^\sharp=(T\oo 1)^\sharp=T^\sharp\oo 1=(T^\sharp)_\FC$ (see \cite[\S IV, section 7, Theorem 1, p. 72]{Jacobson}).
 \end{remark}

\begin{proposition}\label{sotreumsus}
For $i=1,2$ let $(V_i,\esc{\cdot,\cdot}_i)$ be inner product $\K$-vector spaces with radical $\r_i$, and $T\colon V_1\to V_2$ be a continuous linear map (relative to the $\esc{\cdot,\cdot}_i$-topology of $V_i$). Let $\FC\supset\K$ be a field extension. Assume that $T(\r_1)\subset\r_2$, $T(W_1)\subset W_2$ for some subspaces $W_i$ with $V_i= \r_i \oPerp W_i$, $i=1,2$. Then the linear map $T_\FC$ is continuous (relative to the $\esc{\cdot,\cdot}_{i\FC}$-topology of $V_{i\FC}$).
\end{proposition}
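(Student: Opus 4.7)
The plan is to reduce the continuity of $T_\FC$ to the existence of an adjoint via Proposition \ref{banana2}, applied at the extended level. The strategy has three movements: first extract an adjoint for $T$ itself, then lift it to $\FC$, and finally verify that $T_\FC$ satisfies the decomposition hypotheses needed to re-apply Proposition \ref{banana2}.

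First, since $T$ is continuous and satisfies $T(\r_1)\subset\r_2$ and $T(W_1)\subset W_2$, by Proposition \ref{banana2} it admits an adjoint $T^\sharp\colon V_2\to V_1$. I would then define $(T^\sharp)_\FC\colon V_{2\FC}\to V_{1\FC}$ and verify directly that it is an adjoint of $T_\FC$ using bilinearity of the scalar extension: for $v_1\in V_1$, $v_2\in V_2$, $\lambda,\mu\in\FC$,
$$\esc{T_\FC(v_1\otimes\lambda),v_2\otimes\mu}_{2\FC}=\lambda\mu\esc{T(v_1),v_2}_2=\lambda\mu\esc{v_1,T^\sharp(v_2)}_1=\esc{v_1\otimes\lambda,(T^\sharp)_\FC(v_2\otimes\mu)}_{1\FC},$$
as already indicated in Remark \ref{tapita}.

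Next, in order to invoke Proposition \ref{banana2} for $T_\FC$, I need to exhibit the right orthogonal decomposition on each $V_{i\FC}$. Using the isomorphism (\ref{pepino}), the radical of $\esc{\cdot,\cdot}_{i\FC}$ is $\r_i\otimes\FC$; write $\r_{i\FC}$ for this space and set $W_{i\FC}:=W_i\otimes\FC$. Since $V_i=\r_i\oPerp W_i$ we have $V_{i\FC}=\r_{i\FC}\oplus W_{i\FC}$ as $\FC$-vector spaces, and the orthogonality survives extension because $\esc{r\otimes\lambda,w\otimes\mu}_{i\FC}=\lambda\mu\esc{r,w}_i=0$ for $r\in\r_i$, $w\in W_i$, giving $V_{i\FC}=\r_{i\FC}\oPerp^{\esc{\cdot,\cdot}_{i\FC}}W_{i\FC}$. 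Furthermore $T(\r_1)\subset\r_2$ and $T(W_1)\subset W_2$ immediately imply $T_\FC(\r_{1\FC})\subset\r_{2\FC}$ and $T_\FC(W_{1\FC})\subset W_{2\FC}$.

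Having established both the decomposition-preservation hypotheses and the existence of an adjoint for $T_\FC$, the implication ``adjoint $\Rightarrow$ continuous'' in Proposition \ref{banana2} applies to $T_\FC$ and yields continuity relative to the $\esc{\cdot,\cdot}_{i\FC}$-topologies. The main obstacle is purely bookkeeping: checking that $W_i\otimes\FC$ really is a vector-space complement and an orthogonal one for the extended form, and confirming that the decomposition hypotheses of Proposition \ref{banana2} transfer cleanly through the scalar extension. Once those verifications are done, the proof is essentially a diagram chase reusing the principal equivalence already proved in Proposition \ref{banana2}.
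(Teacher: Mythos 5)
Your proof is correct and follows essentially the same route as the paper: extract an adjoint for $T$ via Proposition \ref{banana2}, observe that $(T^\sharp)_\FC$ serves as an adjoint for $T_\FC$, check that the orthogonal decomposition and its preservation transfer to $V_{i\FC}$ using \eqref{pepino}, and reapply Proposition \ref{banana2}. Your version spells out a few verifications (the direct adjoint computation and the orthogonality of $W_{i\FC}$) that the paper leaves implicit, but the argument is the same.
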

\begin{proof}
Applying Proposition \ref{banana2} there exists an adjoint $T^\sharp\colon V_2\to V_1$. Then it follows by definition that $T_\FC$ has also an adjoint which is $(T^\sharp)_\FC$. Then, again Proposition \ref{banana2} gives the continuity of
$T_\FC$ because $T_\FC(\rad(\escemp_{1\FC})=T(\r_1)\oo\FC\subset\r_2\oo\FC=\rad(\escemp_{2\FC})$ and 
$V_{i\FC}=\rad(\escemp_{i\FC})\oPerp W_{i\FC}$ for $i=1,2$. Furthermore 
$T_\FC(W_{1\FC})=T(W_1)\oo\FC\subset W_2\oo\FC=W_{2\FC}$.
\end{proof}

\begin{proposition}
For $i=0,1$ let $(V,\esc{\cdot,\cdot}_i)$  be two inner product $\K$-vector spaces  with $\rad(\esc{\cdot,\cdot}_0)\subset \rad(\esc{\cdot,\cdot}_1)$. Then if $\esc{\cdot,\cdot}_1$ is partially continuous relative to the $\esc{\cdot,\cdot}_0$-topology of $V$, for any field extension $\FC\supset\K$ the inner product 
$\esc{\cdot,\cdot}_{1\FC}$ is partially continuous relative to the $\esc{\cdot,\cdot}_{0\FC}$-topology of $V$.
\end{proposition}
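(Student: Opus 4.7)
The plan is to deduce the result by transporting the Proposition \ref{berenjena} characterization across the field extension, using Proposition \ref{sotreumsus} as the transport vehicle.

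First, I would invoke Proposition \ref{berenjena} in the direction (i)$\Rightarrow$(ii): from the partial continuity of $\esc{\cdot,\cdot}_1$ relative to the $\esc{\cdot,\cdot}_0$-topology and the inclusion $\rad(\esc{\cdot,\cdot}_0)\subset\rad(\esc{\cdot,\cdot}_1)$, I obtain a continuous linear map $T\colon V\to V$ vanishing on $\r_0:=\rad(\esc{\cdot,\cdot}_0)$ such that $\esc{x,y}_1=\esc{T(x),y}_0$ for all $x,y\in V$. By Remark \ref{cachondo}, one may assume $T(V)\subset W$, where $V=\r_0\oPerp^{\esc{\cdot,\cdot}_0}W$. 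In particular $T(\r_0)=0\subset\r_0$ and $T(W)\subset W$, so Proposition \ref{sotreumsus} (applied with $V_1=V_2=V$, $\r_1=\r_2=\r_0$, $W_1=W_2=W$) applies.

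Next, I pass to the scalar extension: the linear map $T_\FC\colon V_\FC\to V_\FC$ is continuous relative to the $\esc{\cdot,\cdot}_{0\FC}$-topology by Proposition \ref{sotreumsus}, and a direct computation on simple tensors yields
\[
\esc{v\oo\l,w\oo\m}_{1\FC}=\l\m\esc{v,w}_1=\l\m\esc{T(v),w}_0=\esc{T(v)\oo\l,w\oo\m}_{0\FC}=\esc{T_\FC(v\oo\l),w\oo\m}_{0\FC},
\]
which extends bilinearly to $\esc{x,y}_{1\FC}=\esc{T_\FC(x),y}_{0\FC}$ for all $x,y\in V_\FC$. Moreover $T_\FC$ vanishes on $\rad(\esc{\cdot,\cdot}_{0\FC})=\r_0\oo\FC$ by formula (\ref{pepino}), and that same formula gives $\rad(\esc{\cdot,\cdot}_{0\FC})\subset\rad(\esc{\cdot,\cdot}_{1\FC})$ since $\r_0\subset\rad(\esc{\cdot,\cdot}_1)$.

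Finally, I would apply Proposition \ref{berenjena} in the reverse direction (ii)$\Rightarrow$(i) to the pair $\esc{\cdot,\cdot}_{0\FC}$, $\esc{\cdot,\cdot}_{1\FC}$ on $V_\FC$, whose hypotheses are now all in place, to conclude that $\esc{\cdot,\cdot}_{1\FC}$ is partially continuous relative to the $\esc{\cdot,\cdot}_{0\FC}$-topology. The only nontrivial verification is the applicability of Proposition \ref{sotreumsus}, i.e.\ that $T$ can be chosen preserving the orthogonal decomposition $V=\r_0\oPerp W$; this is exactly Remark \ref{cachondo}, so the main obstacle dissolves and the rest is bookkeeping.
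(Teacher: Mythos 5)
Your proof is correct and follows essentially the same route as the paper: extract $T$ via Proposition \ref{berenjena}, normalize with Remark \ref{cachondo} so that $T$ respects the orthogonal decomposition, push $T$ to $T_\FC$ with Proposition \ref{sotreumsus}, and transport the identity $\esc{x,y}_1=\esc{T(x),y}_0$ across the scalar extension. The only cosmetic difference is that you close the argument by invoking Proposition \ref{berenjena} (ii)$\Rightarrow$(i) explicitly, whereas the paper appeals directly to Lemma \ref{tomate} (which is precisely what underlies that implication); you are also a bit more explicit about verifying $\rad(\esc{\cdot,\cdot}_{0\FC})\subset\rad(\esc{\cdot,\cdot}_{1\FC})$ and that $T_\FC$ kills the extended radical, which is a small but welcome clarification.
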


\begin{proof}
Indeed, if $\esc{\cdot,\cdot}_1$ is partially continuous relative to the $\esc{\cdot,\cdot}_0$-topology, Proposition \ref{berenjena} implies that $\esc{x,y}_1=\esc{T(x),y}_0$ for some $T\colon V\to V$ linear, continuous and vanishing on $\rad(\esc{\cdot,\cdot}_0)$. Then  $T_\FC\colon V_\FC\to V_\FC$ is continuous by Proposition \ref{sotreumsus} (taking into account Remark \ref{cachondo}). Moreover, we have $$\esc{x\oo\l,y\oo\mu}_{1\FC}=\l\m \esc{x,y}_1=\l\m\esc{T(x),y}_0=\esc{T_\FC(x\oo\l),y\oo\m}_{0\FC}.$$ So $\esc{z,z'}_{1\FC}=\esc{T_\FC(z),z'}_{0\FC}$ for any $z,z'\in V_\FC$ and since $\esc{\cdot,\cdot}_{0\FC}$ is partially continuous by Lemma \ref{tomate} we conclude that $\esc{\cdot,\cdot}_{1\FC}$ is also partially continuous. 
\end{proof}

Let $\F$ be a family of inner products in a $\K$-vector space $V$.
Assume that: 

\begin{enumerate}[label=(\roman*)]
\item there is an element $\esc{\cdot,\cdot}_0\in\F$ whose radical $\r_0$ is contained in any $\rad(\esc{\cdot,\cdot})$ for arbitrary $\esc{\cdot,\cdot}\in\F$; 
\item each $\esc{\cdot,\cdot}$ is partially continuous relative to the $\esc{\cdot,\cdot}_0$-topology of $V$.
\end{enumerate} 

Then for any field extension $\FC\supset\K$, the extended family $$\F_\FC:=\{\esc{\cdot,\cdot}_\FC\colon \esc{\cdot,\cdot}\in\F\}$$
satisfies both properties: $\rad(\esc{\cdot,\cdot}_{0\FC})\subset \rad(\esc{\cdot,\cdot}_\FC)$ for any $\esc{\cdot,\cdot}_\FC\in\F_\FC$, and
$\esc{\cdot,\cdot}_\FC$ is partially continuous relative to the 
$\esc{\cdot,\cdot}_{0\FC}$-topology of $V_\FC$. As a consequence the next result follows.
\begin{corollary}
If $\F$ is a nondegenerate family of inner products in a $\K$-vector space $V$ and $\FC\supset\K$ a field extension, then the family
$\F_\FC$ is again nondegenerate.
\end{corollary}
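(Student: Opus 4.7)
The plan is simply to verify that the scalar extension $\esc{\cdot,\cdot}_{0\FC}$ plays the role of the distinguished nondegenerate element inside $\F_\FC$, and that every other member of $\F_\FC$ is partially continuous relative to the $\esc{\cdot,\cdot}_{0\FC}$-topology. Both requirements are already essentially assembled in the paragraph immediately preceding the statement; the corollary is a repackaging.

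First I would show that $\esc{\cdot,\cdot}_{0\FC}$ is nondegenerate. The nondegeneracy of $\esc{\cdot,\cdot}_0$ gives $\rad(\esc{\cdot,\cdot}_0)=0$, and applying the isomorphism (\ref{pepino}) to $\esc{\cdot,\cdot}_0$ yields
\[
\rad(\esc{\cdot,\cdot}_{0\FC}) \cong \rad(\esc{\cdot,\cdot}_0) \otimes \FC = 0 \otimes \FC = 0.
\]
So the first requirement in Definition \ref{fuerade} holds for $\F_\FC$ with $\esc{\cdot,\cdot}_{0\FC}$ as the distinguished element.

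Next I would check the partial continuity requirement. Fix an arbitrary $\esc{\cdot,\cdot} \in \F$. By the definition of nondegeneracy of $\F$, the inner product $\esc{\cdot,\cdot}$ is partially continuous relative to the $\esc{\cdot,\cdot}_0$-topology of $V$. Moreover, the inclusion $\rad(\esc{\cdot,\cdot}_0) = 0 \subset \rad(\esc{\cdot,\cdot})$ is automatic. These are exactly the hypotheses of the preceding proposition on scalar extension of partially continuous inner products, whose conclusion is precisely that $\esc{\cdot,\cdot}_\FC$ is partially continuous relative to the $\esc{\cdot,\cdot}_{0\FC}$-topology of $V_\FC$.

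Combining the two bullets, $\F_\FC$ contains the nondegenerate element $\esc{\cdot,\cdot}_{0\FC}$ and every member of $\F_\FC$ is partially continuous relative to its induced topology, so $\F_\FC$ is nondegenerate in the sense of Definition \ref{fuerade}. There is no real obstacle here: the substantive content has been done in the isomorphism (\ref{pepino}) and in the preceding proposition, and this corollary only records the specialization to the case $\rad(\esc{\cdot,\cdot}_0)=0$.
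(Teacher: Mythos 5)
Your proof is correct and follows essentially the same route as the paper: the paragraph preceding the corollary already sets this up, applying formula (\ref{pepino}) (with $\rad(\esc{\cdot,\cdot}_0)=0$) to get nondegeneracy of $\esc{\cdot,\cdot}_{0\FC}$, and the preceding proposition (with the automatic inclusion $0=\rad(\esc{\cdot,\cdot}_0)\subset\rad(\esc{\cdot,\cdot})$) to get partial continuity of each $\esc{\cdot,\cdot}_\FC$ in the $\esc{\cdot,\cdot}_{0\FC}$-topology.
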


\subsection{Construction by ultrafilters.}

Roughly speaking, in this subsection we start with a \so\ family $\F$ of inner products on a vector space $V$ over $\K$. Under suitable mild hypothesis, we find a field extension $\FC\supset\K$ and an expanded family $\F_\FC\cup\{\escd{\cdot,\cdot}_\FC\}$ which is nondegenerate in the sense of Definition \ref{fuerade}.

\medskip

\indent Recall that $(I,\le)$ is a {\it directed set}, if  it is a preordered set  satisfying $\forall i,j\in I, \exists \, k\in I\colon i,j\le k$.

\begin{remark}\label{onimuhc} \rm
 Let $\F=\{\esc{\cdot,\cdot}_i\}_{i\in I}$ be a family of inner products in a $\K$-vector space $V$. We may assume that $I$ is a directed set, in fact by Zermelo's Theorem (also known as the Well-Ordering Theorem), any set can be well-ordered. Since any well-ordered set is a directed set, we conclude that there is no loss of generality assuming that $I$ is a directed set for some preorder relation $\le$. 
\end{remark}

We recall that a {\it filter of subsets} ${\mathfrak F}$ of a given set $X$ is a collection ${\mathfrak F}\subset\text{\bf P}(X)$ (the {\it power set of} $X$) such that:
\begin{enumerate}[label=(\roman*)]
    \item $\emptyset\notin\mathfrak{F}, \; X\in \mathfrak{F}$.
    \item $S,S'\in\mathfrak{F}$ implies $S\cap S'\in\mathfrak{F}$.
    \item $S\in\mathfrak{F}$ and $S\subset S'$ implies $S'\in \mathfrak{F}$.
\end{enumerate}
 
 An {\it ultrafilter} on $X$, say $\mathfrak{U}$, is just a maximal filter of subsets of $X$ (equivalently for any $S\subset X$ one has $S\in\U$ or $X\setminus S\in\U$). For more information about filters see \cite[Chapter 3]{F}.
In these conditions we have the following definitions.

\begin{definition}\label{Leo} \rm
Let $\F=\{\esc{\cdot,\cdot}_i\}_{i\in I}$ be a family of inner products in a $\K$-vector space $V$. Note that $I$ is a directed set by Remark \ref{onimuhc}.
For each $i\in I$ define $[i,\to):=\{j\in I\colon i\le j\}$. Take the filter $\mathfrak{F}:=\{S\subset I\colon \exists \, \, i\in I,\, [i,\to)\subset S\}$ and an ultrafilter $\mathfrak{U}$ containing $\mathfrak{F}$. If we define $\K_i:=\K$ we will denote the elements of $\prod_i\K_i$ in the form $(x_i)_{i\in I}$ with $x_i \in \K$. The usual equivalence relation in  $\prod_i\K_i$ is given by $(x_i)_{i\in I}\equiv (y_i)_{i\in I}$ if and only if 
$\{i\in I\colon x_i=y_i\}\in\mathfrak{U}$. 
The equivalence class of $(x_i)_{i\in I}$ will be denoted $[(x_i)_{i\in I}]$ or $[(x_i)]$ for short. The quotient of $\prod_{i\in I}\K_i$ module the relation $\equiv$ will be denoted:
\begin{equation}\label{coneho}
\FC:=\prod_{i\in I}\K_i/\mathfrak{U}.
\end{equation}

\end{definition}

It is well known that $\FC\supset\K$ is a field extension for the operations:
$$[(x_i)]+[(y_i)]:=[(x_i+y_i)], \ [(x_i)][(y_i)]:=[(x_iy_i)].$$

Indeed the extension is realized via the canonical monomorphism $\K\to\FC$ such that $\lambda\mapsto\vec{\l}$ where $\vec{\l}$ denotes the equivalence class of the element $(x_i)_{i\in I}$ such that $x_i=\l$ for any $i$. For further details about the construction of the field $\FC$ see \cite[Chapter 3]{F}. 

\begin{remark} \label{estornudo} \rm
Assume that $I$ is finite and endow it with a well ordering. So there is no loss of generality assuming that 
$I=\{1,\ldots,n\}$ with its usual order as a subset of the natural numbers.
Let $\mathfrak{F}$ be a filter containing all the intervals 
$[i,\to)$ for $i=1,\ldots,n$. Then 
$$\mathfrak{F}=\{S\subset I\colon n\in S\}.$$
Moreover, this filter is an ultrafilter because any subset of $I$ contains $n$ or its complementary contains $n$. We will denote $\K^I=\prod_{i=1}^n\K_i$. So we take $\U=\mathfrak{F}$ and consider $\K^I/\U$ where now two $n$-tuples
$(x_i)_1^n$ and $(y_i)_1^n$ are related if and only if $x_n=y_n$.
    Therefore the canonical monomorphism $\K\to\FC:=\K^I/\U$ is an isomorphism since for any $[(x_i)_1^n]\in\FC$ we have $[(x_i)_1^n]=[(x_n,x_n,\ldots,x_n)]=\ray{x_n}$. In this case we do not get a proper extension field of $\K$ but $\K$ itself.
\end{remark}

\begin{definition} \rm
Let $\F=\{\esc{\cdot,\cdot}_i\}_{i\in I}$ be a family of inner products in the $\K$-vector space $V$. Observe that $I$ is a directed set by Remark \ref{onimuhc}. 
We will say that an element $x\in V$ is {\em pathological} if for any finite-dimensional subspace $S$ of $V$ the set $S_x:=\{i\in I\colon \esc{x,S}_i=0\}\in\mathfrak{U}$ where $\mathfrak{U}$ is the ultrafilter as in Definition \ref{Leo}. The set of all  pathological elements will be denoted $\path_{\F}(V)$. When $\path_{\F}(V)=0$ we will say that $\F$ is \em{nonpathological}.
\end{definition} 

It is easy to prove that the set of all pathological elements is a subspace of $V$.
Let $x,y\in\path_{\F}(V)$ then for any finite-dimensional subspace $S$ of $V$ consider $S_x=\{i\in I\colon \esc{x,S}_i=0\}$ and similarly $S_y$.
Then $S_x,S_y\in\mathfrak{U}$ so $S_x\cap S_y\in\mathfrak{U}$. We have $S_x\cap S_y\subset S_{x+y}$ whence $S_{x+y}\in\mathfrak{U}$ implying $x+y\in\path_{\F}(V)$.
Thus $\path_{\F}(V)+\path_{\F}(V)\subset\path_{\F}(V)$ and analogously we have $\K \path_{\F}(V)\subset\path_{\F}(V)$. Consequently choosing a complement to $\path_{\F}(V)$ in $V$ we have a decomposition $V=\path_{\F}(V)\oplus W$.
When $W=0$ all the elements are pathological and one can see that this is equivalent to the statement that for any $x,y\in V$ we have $\{i\in I\colon \esc{x,y}_i=0\}\in\U$.
When $W\ne 0$ we can consider $\F\vert_W$ and the question is: Is $\F\vert_W$ a nonpathological family?

\begin{proposition} Suppose $\F=\{\esc{\cdot,\cdot}_i\}_{i\in I}$ is a family of inner products in the $\K$-vector space $V$ where $V=\path_{\F}(V)\oplus W$. If $W\ne 0$ then $\F\vert_W$ is a nonpathological family.
\end{proposition}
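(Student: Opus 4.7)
The plan is to show that if $x\in W$ is pathological for $\F|_W$, then $x$ is automatically pathological for $\F$ when viewed as an element of $V$; since $x\in W$ and $V=\path_\F(V)\oplus W$ is a direct sum, this forces $x=0$, so $W$ contains no nonzero pathological vectors and $\F|_W$ is nonpathological.

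To implement this, I would fix an arbitrary finite-dimensional subspace $S\subset V$ and verify that $\{i\in I\colon\esc{x,S}_i=0\}\in\U$. Pick a basis $s_1,\ldots,s_n$ of $S$ and use the decomposition to write $s_j=p_j+w_j$ with $p_j\in\path_\F(V)$ and $w_j\in W$. Set $W':=\span(w_1,\ldots,w_n)$, a finite-dimensional subspace of $W$. Since $\esc{x,s_j}_i=\esc{x,p_j}_i+\esc{x,w_j}_i$, the intersection
$$A\cap B:=\{i\colon\esc{x,p_j}_i=0,\ \forall j\}\cap\{i\colon\esc{x,W'}_i=0\}$$
is contained in $\{i\colon\esc{x,S}_i=0\}$, so by the upward closure of $\U$ it suffices to show $A,B\in\U$.

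The membership $B\in\U$ is immediate from the hypothesis that $x$ is pathological for $\F|_W$, applied to the finite-dimensional subspace $W'\subset W$. For $A$, I would use the symmetry of the inner products to rewrite $\esc{x,p_j}_i=\esc{p_j,x}_i$, and then observe that each $p_j$ is pathological in $V$, so applying its defining condition to the one-dimensional subspace $\K x$ gives $A_j:=\{i\colon\esc{p_j,x}_i=0\}\in\U$ for $j=1,\ldots,n$. Since $\U$ is closed under finite intersection, $A=\bigcap_{j=1}^n A_j\in\U$, which completes the argument.

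I do not foresee any genuine obstacle: the proof is essentially a bookkeeping exercise combining the direct-sum decomposition of $V$, the symmetry of bilinear forms (to swap the roles of $x$ and $p_j$ so that we may invoke pathology of the $p_j$'s), and the standard ultrafilter properties of finite intersection and upward closure. The only mild subtlety is recognizing that one must split the generators of $S$ rather than $S$ itself, and control the two halves separately.
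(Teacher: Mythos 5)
Your proof is correct and follows essentially the same route as the paper's: decompose the generators of $S$ into their $\path_{\F}(V)$ and $W$ components, handle the $W$-part via the pathology of $x$ in $\F\vert_W$ and the $\path_{\F}(V)$-part via the pathology of each $p_j$ applied (after swapping arguments by symmetry) to $\K x$, then intersect the relevant ultrafilter sets. The only cosmetic difference is that you fix a basis of $S$ at the outset and bundle the $W$-components into $W'=\span(w_1,\ldots,w_n)$, whereas the paper first argues for a single fixed $s\in S$ and only then passes to a basis; the content is identical.
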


\begin{proof} Let $a \in \path_{\F\vert_W}(W)$, this means that for any finite-dimensional subspace $R$ of $W$ the set $R_a=\{i\in I\colon \esc{a,R}_i=0\} \in \mathfrak{U}$. Consider a finite-dimensional subspace $S$ of $V$.
We want to prove that  $\{i\in I\colon \esc{a,S}_i=0\}\in \U$. Let  $\pi_W: V \to W$ be the projection onto $W$. Consider $R:=\pi_W(S)$ the projection of $S$ onto $W$. We know that $R_a\in\mathfrak{U}$.
Write $s= p + w \in S$ where $p \in \path_{\F}(V)$ and $w \in W$, then $\esc{a,s}_i=\esc{a,p+w}_i=\esc{a,p}_i+\esc{a,w}_i$. Since $p$ is pathological $(\K a)_p=\{i\in I\colon\esc{p,\K a}_i=0\}\in \U$. Hence $ R_a\cap (\K a)_p \in \U$. Now observe that if $i\in(\K a)_p$ we can say that 
$\esc{p,a}_i=0$ and in case $i\in R_a$ we can write 
$0=\esc{a,R}_i=\esc{a,\pi_W(S)}_i$ which implies $\esc{a,w}_i=0$. Consequently, if $i\in R_a\cap (\K a)_p \in \U$ we have 
$\esc{a,s}_i=\esc{a,p}_i+\esc{a,w}_i=0$. Hence for an fixed $s\in S$ we have $\{i\in I\colon \esc{a,s}_i=0\}\in\U$. To finish the proof we consider a basis $\{s_1,\ldots, s_k\}$ of $S$. We know that each set $\{i\in I\colon \esc{a,s_j}_i=0\}\in\U$ ($j=1,\ldots,k$), hence the intersection of these sets is contained in $\U$. We conclude that $\{i\in I\colon \esc{a,S}_i=0\}\in\U$. Thus $a\in\path_{\F}(V)\cap W=0$, so the family $\F\vert_W$ is nonpathological.
\end{proof}

\begin{definition} \rm
Consider a  family $\F=\{\esc{\cdot,\cdot}_i\}_{i\in I}$ of inner products in the $\K$-vector space $V$. We define a $\K$-bilinear symmetric map $\escd{\cdot,\cdot}\colon V\times V\to\FC$, being $\FC$ as in \eqref{coneho}, by
\begin{equation}\label{etnetop}
\escd{x,y}:=[(\esc{x,y}_i)_{i\in I}] \ \text{ for any }\ x, y \in V.
\end{equation}
\end{definition}

 \begin{proposition}Let $\F=\{\esc{\cdot,\cdot}_i\}_{i\in I}$ be a family  of inner products in the $\K$-vector space $V$ and the $\K$-bilinear symmetric map $\escd{\cdot,\cdot}\colon V\times V\to\FC$ defined in  \eqref{etnetop}. Then
 $ \path_{\F}(V)=\{x\in V\colon \escd{x,V}=0\}.$
\end{proposition}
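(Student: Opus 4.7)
The plan is to prove the set equality by showing both inclusions, each of which reduces to a simple manipulation with ultrafilter properties and the definition of equality in $\FC$.

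First I would unpack the condition $\escd{x,V}=0$. By definition of the equivalence relation used to build $\FC=\prod_i\K_i/\U$, saying $\escd{x,y}=0$ in $\FC$ is exactly saying that $\{i\in I\colon\esc{x,y}_i=0\}\in\U$. So $\escd{x,V}=0$ is equivalent to requiring $\{i\in I\colon\esc{x,y}_i=0\}\in\U$ for every $y\in V$.

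For the inclusion $\path_{\F}(V)\subset\{x\colon\escd{x,V}=0\}$, given a pathological $x$ and an arbitrary $y\in V$, I would simply apply the pathological condition to the one-dimensional subspace $S=\K y$; this finite-dimensional $S$ forces $S_x=\{i\colon\esc{x,y}_i=0\}\in\U$, which by the previous paragraph gives $\escd{x,y}=0$.

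For the reverse inclusion, start with $x$ such that $\escd{x,V}=0$ and take any finite-dimensional subspace $S\subset V$ with basis $s_1,\ldots,s_n$. For each $j$ the set $A_j:=\{i\in I\colon\esc{x,s_j}_i=0\}$ lies in $\U$ by assumption, so the finite intersection $\bigcap_{j=1}^n A_j$ also lies in $\U$ (using the filter axiom that intersections of finitely many elements of $\U$ are in $\U$). By bilinearity, if $i\in\bigcap_j A_j$ then $\esc{x,s}_i=0$ for every $s\in S$, so $\bigcap_j A_j\subset S_x$, and upward-closure of $\U$ gives $S_x\in\U$. Hence $x\in\path_{\F}(V)$.

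There is no real obstacle here; the only subtlety worth naming is that the argument requires $S$ to be finite-dimensional so that the intersection $\bigcap_j A_j$ is a \emph{finite} intersection and therefore stays in $\U$. This is exactly why pathological was defined in terms of finite-dimensional subspaces rather than arbitrary ones.
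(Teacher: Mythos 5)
Your proposal is correct and follows essentially the same route as the paper's proof: the forward inclusion applies the pathological condition to the one-dimensional subspace $\K y$, and the reverse inclusion picks a basis of $S$, puts each $\{i : \esc{x,s_j}_i=0\}$ in $\U$, and uses closure of $\U$ under finite intersections and supersets. The only difference is that you spell out the filter axioms being invoked a bit more explicitly, which is fine.
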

\begin{proof}
If $x\in \path_{\F}(V)$ each $S_x\in\U$ (for arbitrary finite-dimensional subspace $S$). Take an arbitrary $v\in V$ then
$(\K v)_x\in\U$ hence $\{i\colon\esc{x,v}_i=0\}\in\U$ so 
$\escd{x,v}=0$ and since $v$ is arbitrary $\escd{x,V}=0$.
Conversely if $\escd{x,V}=0$ then take a finite-dimensional subspace $S$ of $V$. Consider a basis $\{s_1,\ldots,s_k\}$ of $S$.
Then $0=\escd{x,s_j}=[(\esc{x,s_j}_i)_{i\in I}]$ whence 
$\{i\in I\colon \esc{x,s_j}=0\}\in\U$ for $j=1,\ldots,k$. So
$\{i\in I\colon \esc{x,S}=0\}\in\U$ and $x\in \path_{\F}(V)$.
\end{proof}

\begin{definition} \rm
Let $\F=\{\esc{\cdot, \cdot}_i\}_{i\in I}$ be a family  of inner products in the $\K$-vector space $V$ and let $\escd{\cdot,\cdot}\colon V\times V\to\FC$ be as in \eqref{etnetop}. We will define on $V_\FC:=V \otimes \FC$ an inner product
$\escd{\cdot,\cdot}_\FC\colon V_\FC\times V_\FC\to \FC$ such that 
$$\escd{x\oo\lambda,y\oo\mu}_\FC:=\lambda\mu\escd{x,y} \ \text{ for any }\ x\oo\lambda,\  y\oo\mu \in V_\FC.$$ 
\end{definition}

\begin{theorem}\label{pajaro}Let $\F=\{\esc{\cdot,\cdot}_i\}_{i\in I}$ be a  family of inner products in the $\K$-vector space $V$. We have the following:
\begin{enumerate}[label=(\roman*)]

\item\label{papa} If $\F$ is nonpathological and \so, then $\escd{\cdot,\cdot}_\FC$ is nondegenerate.
\item \label{pipi} If $\escd{\cdot,\cdot}_\FC$ is nondegenerate then $\F$ is nonpathological.
\end{enumerate}
\end{theorem}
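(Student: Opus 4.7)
The plan is to treat the two implications separately, with the characterization $\path_\F(V) = \{x \in V : \escd{x,V}=0\}$ from the preceding proposition serving as the bridge between pathology and degeneracy.

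For part \ref{papa}, I would fix a basis $B = \{v_j\}_{j \in J}$ of $V$ that simultaneously orthogonalizes $\F$, which exists by hypothesis. Then $\{v_j \oo 1\}_{j \in J}$ is an $\FC$-basis of $V_\FC$. Take a nonzero $z \in V_\FC$ and write it as a finite sum $z = \sum_{j \in F} v_j \oo \lambda_j$ with $\lambda_j \ne 0$ for every $j \in F$. For any $k \in F$ I compute
\[
\escd{z, v_k \oo 1}_\FC = \sum_{j \in F} \lambda_j \escd{v_j, v_k};
\]
the off-diagonal terms vanish because $\esc{v_j,v_k}_i = 0$ for every $i \in I$ when $j \ne k$, so $\escd{v_j, v_k} = 0$ in $\FC$. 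What remains is $\lambda_k \escd{v_k, v_k}$. If $\escd{z, V_\FC}_\FC = 0$, then $\lambda_k \escd{v_k, v_k} = 0$ in the field $\FC$ and hence $\escd{v_k, v_k}=0$. Combining this with the off-diagonal vanishing and using that $B$ spans $V$, one obtains $\escd{v_k, V} = 0$, so $v_k \in \path_\F(V) = 0$ by the nonpathological hypothesis, contradicting $v_k \in B$.

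For part \ref{pipi}, I would assume $\escd{\cdot,\cdot}_\FC$ is nondegenerate and pick $x \in \path_\F(V)$. By the preceding proposition, $\escd{x,V} = 0$. An $\FC$-bilinear extension then gives $\escd{x \oo 1, w \oo \mu}_\FC = \mu\escd{x,w} = 0$ on every generator $w \oo \mu$ of $V_\FC$, so $x \oo 1$ lies in $\rad(\escd{\cdot,\cdot}_\FC) = 0$. Since the map $v \mapsto v \oo 1$ from $V$ into $V_\FC$ is injective (because $\FC$ is a field extension of $\K$, hence a flat $\K$-module), this forces $x = 0$, yielding $\path_\F(V) = 0$.

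The only delicate point is to verify in \ref{papa} that the ultrafilter class $\escd{v_j, v_k}$ actually collapses to $0$ in $\FC$ when $j \ne k$: because the basis $B$ orthogonalizes \emph{every} member of $\F$ simultaneously, the defining sequence $(\esc{v_j,v_k}_i)_{i\in I}$ is identically zero (not merely zero on a set in $\U$). This is what reduces the pairing $\escd{z,v_k\oo 1}_\FC$ to a single diagonal term, and it is exactly this feature of simultaneous orthogonality — rather than ultrafilter combinatorics — that drives the argument and keeps it valid regardless of the cardinalities of $I$ and $J$.
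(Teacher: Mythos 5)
Your proposal is correct and follows essentially the same path as the paper: the crucial observation in both is that simultaneous orthogonality of $\F$ forces $\escd{v_j,v_k}$ to be genuinely zero for $j\ne k$ (as the defining net is identically zero, not merely zero $\U$-almost everywhere), reducing everything to the diagonal entries, and part \ref{pipi} in both cases passes through the identification $\path_\F(V)=\{x:\escd{x,V}=0\}$. The only cosmetic difference is in part \ref{papa}: the paper, having shown $\escd{v_j,v_j}=0$, re-derives pathology of $v_j$ directly from the ultrafilter definition, whereas you invoke the preceding proposition to conclude $\escd{v_j,V}=0\Rightarrow v_j\in\path_\F(V)$; this is a slightly tidier shortcut but not a different argument.
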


\begin{proof}
Let $B=\{v_j\}$ be an orthogonal basis of $V$ relative to any inner product of $\F$.
Then $\{v_j\oo 1\}$ is an orthogonal basis of $(V_\FC,\escd{\cdot,\cdot}_\FC)$ and
to prove that $\escd{\cdot,\cdot}_\FC$ is nondegenerate it suffices to prove that
$\escd{v_j\otimes 1,v_j\otimes 1}_\FC$ is nonzero for any $j$. But $\escd{v_j\otimes 1,v_j\otimes 1}_\FC=\escd{v_j,v_j}$. If we suppose that there exists $j$ such that $\escd{v_j,v_j}=0$ we can define 
$T:=\{i\in I\colon \esc{v_j,v_j}_i=0\}\in\U$. Next we prove that $v_j$ is a pathological element of $\F$: let $S$ be a finite-dimensional subspace of $V$, then there is a finite collection $v_{k_1},\ldots,v_{k_m}$ of elements of $B$ such that $S\subset\span(\{v_{k_q}\}_{q=1}^m)$. Now we see that $\{i\in I\colon \esc{v_j,S}_i=0\}\in\U$. Indeed, first $T\subset S_q:=\{i\in I\colon \esc{v_j,v_{k_q}}_i=0\}$ for each $q\in\{1,\ldots,m\}$. Thus $\cap_1^m S_q\in\U$ and $\cap_1^m S_q\subset \{i\in I\colon \esc{v_j,S}_i=0\}$. We conclude that $v_j\in\path_{\F}(V)=0$ which is a contradiction.

Next to prove \ref{pipi} we see that if $v\in\path_{\F}(V)$, then $v\oo 1\in\rad(\escd{\cdot,\cdot}_\FC)$. Indeed, we have
 $\escd{v\oo 1,x\oo 1}_\FC=\escd{v,x}=[(\esc{v,x}_i)]$ for 
$x\in V$. But $v$ is pathological so $\{i\colon \esc{v,x}_i=0\}\in\U$. Thus
$[(\esc{v,x}_i)]=0$ hence $v\oo 1\in\rad(\escd{\cdot,\cdot}_{\FC})$.
\end{proof}

\begin{remark}\label{aparruz}\rm 
 Note that if $\{v_j\}$ is a basis of $V$ diagonalizing $\F$, then $\{v_j\oo 1\}$ is
 a basis of $V_\FC$ diagonalizing  the inner product $\escd{\cdot,\cdot}_\FC$. In this way the family $\F_\FC\cup\{\escd{\cdot,\cdot}_\FC\}$ is \so. 
\end{remark}

In order to illustrate Theorem \ref{pajaro} we can consider the following example.

\begin{example}\rm
 Assume that $V$ is $\aleph_0$-dimensional and consider a family of inner products in $V$ given by $\F=\{\esc{\cdot,\cdot}_i\}_{i\in\N}$ where relative to a basis $B=\{v_j\}$ of $V$ we have $\esc{v_i,v_j}_k=0$ if $i\ne j$ for each $k$,
$\esc{v_j,v_j}_j=0$ and $\esc{v_j,v_j}_i=1$ for $i\ne j$. Thus the matrix of any $\esc{\cdot,\cdot}_i$ relative to $B$ is a diagonal matrix with $1$'s on the diagonal except for the $(i,i)$ entry which is $0$. Consequently each inner product of $\F$ is degenerate. Furthermore, $\F$ is nonpathological since 
if $x$ is a pathological element and we consider  an arbitrary $v_j\in B $, then $\{i\in \N \colon \esc{x,v_j}_i=0\} \in \U$, so $\{i\in \N \colon \esc{x,v_j}_i=0\}\ne \{j\}$ and  there exists $i\ne j$ such that $0=\esc{x,v_j}_i=\sum_{k}\l_k\esc{v_k,v_j}_i=\l_j\esc{v_j,v_j}_i=\l_j$ for arbitrary $j$, therefore $x=0$. 
Hence if we compute $\escd{v_j,v_i}$ we find 
$$\escd{v_j,v_i}=\begin{cases}1 & \hbox{if}\ i=j\\ 
0 & \hbox{if}\ i\ne j\end{cases}$$ so that $\escd{\cdot,\cdot}_\FC$ is nondegenerate.
\end{example}

Now we observe that when $\F$ is nonpathological and $I$ is finite, then $\F$ is nondegenerate.

\begin{remark}\rm
 Assume that $\F$ is nonpathological, $I$ is finite (concretely, without loss of generality that $I=\{1,\dots,n\}$) and
that the ultrafilter $\U$ consists of all the subsets of $I$ containing $n$. The $\K$-bilinear symmetric  map $\escd{\cdot,\cdot}$ is precisely $\esc{\cdot,\cdot}_n$ because
$\escd{x,y}=[(\esc{x,y}_i)_1^n]=[(\esc{x,y}_n,\ldots,\esc{x,y}_n)]$.
We can check that in this case $\esc{\cdot,\cdot}_n$ is nondegenerate: assume that for a nonzero $x\in V$ we have $\esc{x,V}_n=0$, since $x$ is not pathological there is some finite-dimensional subspace $S$ of $V$ such that $S_x\notin\U$.  
Then $n\notin S_x$ whence $n\in \complement S_x$ and so $\esc{x,S}_n\ne 0$ which is a contradiction.
\end{remark}

\begin{corollary} \label{playita} If $\F=\{\esc{\cdot,\cdot}_i\}_{i\in I}$ is a nonpathological  and \so\  family of inner products in the $\K$-vector space $V$, then the family $\F_\FC\cup\{\escd{\cdot,\cdot}_\FC\}$ is nondegenerate.
\end{corollary}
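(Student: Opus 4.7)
The plan is to assemble the conclusion from three ingredients already in hand: Theorem \ref{pajaro}, Remark \ref{aparruz}, and Proposition \ref{atupoi}. Unpacking Definition \ref{fuerade}, proving that $\F_\FC\cup\{\escd{\cdot,\cdot}_\FC\}$ is nondegenerate requires exhibiting one member with trivial radical and checking that every other member is partially continuous with respect to the topology induced by that distinguished member. The natural candidate to play the role of the nondegenerate anchor is $\escd{\cdot,\cdot}_\FC$ itself.

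First, I would invoke Theorem \ref{pajaro}\ref{papa}: since $\F$ is nonpathological and simultaneously orthogonalizable, $\escd{\cdot,\cdot}_\FC$ is nondegenerate on $V_\FC$. This identifies the anchor and verifies the first requirement of Definition \ref{fuerade} for the enlarged family.

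Second, I would check that the enlarged family is still simultaneously orthogonalizable. This is exactly Remark \ref{aparruz}: if $\{v_j\}$ is a basis of $V$ orthogonalizing $\F$, then $\{v_j\oo 1\}$ is a basis of $V_\FC$ which orthogonalizes each $\esc{\cdot,\cdot}_{i\FC}$ (by the definition of the scalar extension in Subsection \ref{periquito}) and simultaneously diagonalizes $\escd{\cdot,\cdot}_\FC$. Hence $\F_\FC\cup\{\escd{\cdot,\cdot}_\FC\}$ admits a common orthogonal basis.

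Third, I would apply Proposition \ref{atupoi}\ref{atupoi2} to each pair $\bigl(\escd{\cdot,\cdot}_\FC,\esc{\cdot,\cdot}_{i\FC}\bigr)$: because these two inner products are simultaneously orthogonalizable (by the previous step) and $\escd{\cdot,\cdot}_\FC$ is nondegenerate, the second must be partially continuous with respect to the $\escd{\cdot,\cdot}_\FC$-topology of $V_\FC$. Running this through every index $i\in I$ verifies the partial continuity hypothesis for every element of the enlarged family, so Definition \ref{fuerade} is fulfilled and the family is nondegenerate. No serious obstacle appears: the corollary is essentially the composite ``(i) of Theorem \ref{pajaro} $+$ Remark \ref{aparruz} $+$ (ii) of Proposition \ref{atupoi}'', and the only point worth stating explicitly is that $\escd{\cdot,\cdot}_\FC$ itself is taken as the distinguished nondegenerate member of the enlarged family.
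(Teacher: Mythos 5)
Your proposal is correct and matches the paper's own proof step for step: both invoke Theorem \ref{pajaro}\ref{papa} to identify $\escd{\cdot,\cdot}_\FC$ as the nondegenerate anchor, Remark \ref{aparruz} to see that the enlarged family shares a common orthogonal basis, and Proposition \ref{atupoi}\ref{atupoi2} to extract the required partial continuity of each $\esc{\cdot,\cdot}_{i\FC}$. Nothing to add.
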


\begin{proof}
First observe that $\escd{\cdot,\cdot}_\FC$ is nondegenerate by Theorem \ref{pajaro} item \ref{papa}. In order prove the statement we need to check that any $\esc{\cdot,\cdot}_{i\FC}$ is partially continuous relative to the $\escd{\cdot,\cdot}_\FC$-topology. Since both  $\esc{\cdot,\cdot}_{i\FC}$ and $\escd{\cdot,\cdot}_{\FC}$ are \so\ (see Remark \ref{aparruz}), then applying Proposition \ref{atupoi} item \ref{atupoi2} we have that  {$\esc{ \cdot, \cdot  }_{i\FC}$} is partially continuous. Summarizing the family $\F_\FC\cup\{\escd{\cdot,\cdot}_{\FC}\}$ is nondegenerate. 
\end{proof}

\subsubsection{\textit{\textbf{The real-complex case.}}}
In this subsubsection we will study the case of the previous statement in which the field is $\R$ or $\C$. Their respective extended fields are: the hyperreal numbers denoted by $\hipR$ and hypercomplex numbers denoted by $\hipC$. Recall
that $\hipR=\R^I/\U$ (and similarly for $\hipC$) as in the previous subsection.
An element in $\hipR$ or in $\hipC$ can be termed a {\it hypernumber} so that we do not precise if it is an hyperreal or an hypercomplex number.
The absolute value in $\R$ and the complex modulus in $\C$ will be denoted by $\vert\cdot\vert$ (we will distinguish them only by the context). 
The extension of the absolute value function $\vert\cdot\vert$ to $\hipR$ will be denoted by abuse of notation
as $\vert\cdot\vert\colon\hipR\to {\hipR}$ (and similarly $\vert\cdot\vert\colon\hipC\to{\hipC}$). 
This extension is given by $\vert [(x_i)]\vert:=[(\abs{x_i})]$ for any $[(x_i)]$.
Also, given two hyperreals $x,y\in{\hipR}$ with 
$x=[(x_i)]$ and $y=[(y_i)]$,
it is said that $x<y$ if $\{i\in I\colon x_i<y_i\}\in\U$.
Recall some basic facts on infinite and infinitesimal hypernumbers: let $\K=\R$ or $\C$, an element $x\in\hipK$ is said to be {\em finite} if there is some real $M\in\R$ such that $\vert x\vert<M$. An element $x\in\hipK$ is said to be an {\em infinitesimal} is for any real $\epsilon>0$ we have $\abs{x}<\epsilon$. A known result is that any finite hypernumber $x\in\hipK$ can be written of the form $x=\st(x)+w$ where $w$ is an infinitesimal and $\st(x)\in\K$. The element $\st(x)$ is unique and is called the
{\em standard part} of $x$. Note also that if $x,y$ are finite hypernumbers then $x+y$ is a finite hypernumber and $\st(x+y)=\st(x)+\st(y)$. Also if $\l\in\K$ and $x$ is a finite element of $\hipK$ then $\l x$ is finite and $\st(\l x)=\l\st(x)$.
Finally we recall the extended notation $x\approx y$ for hypernumbers whose difference is an infinitesimal (consequently $\st(x)=\st(y)$).
 \begin{definition}\rm{Let $V$ be a $\K$-vector space for $\K= \R, \C$ and consider a family of inner products $\F=\{\esc{\cdot,\cdot}_i\}_{i \in I}$ of $V$. We recall that $I$ is a directed set by Remark \ref{onimuhc}. We say that $\F$ is \emph{bounded} if for $x,y \in V$ there exists $M \in \R$ such that $\{ i \in I \colon \vert \esc{x,y}_i \vert < M \} \in \U$} being $\U$ an ultrafilter as in Definition \ref{Leo}.
\end{definition}
In case $I$ is finite, it is trivial that any family $\F$ indexed by $I$, is bounded. So the definition is meaningful only if $I$ is an infinite set. 
In general, $\F$ is bounded if and only if for any $x,y\in V$, the hypernumber 
$[(\esc{x,y}_i)]$ is finite. Defining $\escd{\cdot,\cdot}\colon V\times V\to\hipR$
as in formula \eqref{etnetop} we can say that $\F$ is bounded if and only if for any $x,y\in V$, we have that $\escd{x,y}$ is finite.
\begin{definition}\rm{Given the family $\F=\{\esc{\cdot,\cdot}_i\}_{i \in I}$, 
consider an ultrafilter $\U$ as Definition \ref{Leo}, containing all the intervals $[i,\to)$ with $i\in I$. An element $x \in V$ is said to be \emph{negligible} if for every $y \in V$ and for every real $\epsilon > 0$, we have $\{i \in I \colon \vert \esc{x,y}_i \vert < \epsilon\} \in \U$. We will denote the set of all negligible elements by $\neg_\F(V)$. When $\neg_\F(V)=0$ we will say that $\F$ is \emph{robust}.}
\end{definition}

An element $x$ is negligible if and only if for any $y\in V$ the hypernumber
$\escd{x,y}$ (as in formula \eqref{etnetop} again) is an infinitesimal. Observe that $\neg_{\tiny\F}(V)$ is a subspace of $V$ and that if $x \in \path_{\F}(V)$ then $x$ is negligible. So we can write $V=\neg_\F(V) \oplus W$ for certain subspace $W \subset V$. Furthermore $\F\vert_ W$ is robust.

\begin{definition}\rm 
If $\F$ is a bounded family we can define an inner product $\esct{\cdot,\cdot}\colon V\times V\to\R$ such that $\esct{x,y}:=\st(\escd{x,y})$ for any $x,y\in V$.
\end{definition}

As a final result we have the following one.

\begin{theorem}\label{wwe} Let $V$ be a $\K$-vector space for $\K= \R, \C$ and consider a bounded family of inner products $\F=\{\esc{\cdot,\cdot}_{i}\}_{i \in I}$ of $V$. Then:
\begin{enumerate}[label=(\roman*)]
\item\label{wweuno} We have $\rad\esct{\cdot,\cdot}=\neg\nolimits_\F(V)$. 
\item\label{wwedos}  If $\F$ is robust, the inner product $\esct{\cdot,\cdot}$ is nondegenerate.
\item\label{wwetres} $\F$ is \so\ if and only if the enlarged family $\F\cup\{\esct{\cdot,\cdot}\}$ is \so. Furthermore, if $\F$ is robust and \so, the enlarged family $\F\cup\{\esct{\cdot,\cdot}\}$ is nondegenerate.
\item\label{wwecuat} The simultaneous orthogonalizations of $V$ relative to $\F$ and to $\F\cup\{\esct{\cdot,\cdot}\}$ agree. 
\end{enumerate}
\end{theorem}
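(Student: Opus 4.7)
The plan is to dispatch the four items in sequence, with item \ref{wwedos} immediate from \ref{wweuno} and item \ref{wwecuat} a direct consequence of the bulk of \ref{wwetres}. A preliminary remark: the boundedness hypothesis on $\F$ is exactly what makes each hypernumber $\escd{x,y}=[(\esc{x,y}_i)]$ finite and hence $\esct{\cdot,\cdot}=\st(\escd{\cdot,\cdot})$ well-defined; I use this throughout.

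For item \ref{wweuno}, I would unfold the definitions: $x\in\rad\esct{\cdot,\cdot}$ exactly when $\st(\escd{x,y})=0$ for every $y\in V$, equivalently when the hypernumber $\escd{x,y}$ is infinitesimal for every $y$. Spelling out infinitesimality gives $\{i\in I\colon \vert\esc{x,y}_i\vert<\epsilon\}\in\U$ for every real $\epsilon>0$ and every $y\in V$, which is precisely the definition of $x\in\neg_\F(V)$. Item \ref{wwedos} is then immediate: robustness of $\F$ says $\neg_\F(V)=0$, whence $\rad\esct{\cdot,\cdot}=0$ by (i).

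For item \ref{wwetres}, suppose $B=\{v_j\}$ simultaneously orthogonalizes $\F$. Then for $j\ne k$ every $\esc{v_j,v_k}_i$ vanishes, so $\escd{v_j,v_k}=0$ in $\hipK$ and $\esct{v_j,v_k}=\st(0)=0$. Thus $B$ also orthogonalizes $\F\cup\{\esct{\cdot,\cdot}\}$; the converse is trivial, as $\F$ is a subfamily. If moreover $\F$ is robust, then $\esct{\cdot,\cdot}$ is nondegenerate by (ii), and, since each pair $\{\esct{\cdot,\cdot},\esc{\cdot,\cdot}_i\}$ is simultaneously orthogonalizable (by $B$), Proposition \ref{atupoi} item \ref{atupoi2} forces each $\esc{\cdot,\cdot}_i$ to be partially continuous relative to the $\esct{\cdot,\cdot}$-topology. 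This is precisely nondegeneracy of the enlarged family in the sense of Definition \ref{fuerade}. Finally, item \ref{wwecuat} follows from the equivalence just established: any basis orthogonalizing $\F$ also orthogonalizes $\F\cup\{\esct{\cdot,\cdot}\}$ by the argument above, and the reverse implication is obvious.

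I do not anticipate a serious obstacle; the only subtlety is a bookkeeping point in the complex case, namely that $\esct{\cdot,\cdot}$ a priori takes values in $\R$, so to include it in the $\C$-valued family $\F$ on equal footing one composes with the inclusion $\R\hookrightarrow\C$ and regards it as a $\C$-valued symmetric bilinear form. The deepest input is Proposition \ref{atupoi} item \ref{atupoi2}; everything else is a direct translation between the nonstandard-analysis language of standard parts and the filter language of negligibility.
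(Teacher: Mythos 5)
Your argument reproduces the paper's proof step by step: the dictionary between $\st(\escd{x,y})=0$, infinitesimality, and negligibility for item \ref{wweuno}; the immediate deduction of \ref{wwedos}; the same-basis observation plus Proposition \ref{atupoi} item \ref{atupoi2} for \ref{wwetres}; and the basis equivalence for \ref{wwecuat}. One small correction to your bookkeeping aside: for $\K=\C$ the standard part of a finite hypercomplex number lies in $\C$, not $\R$, so $\esct{\cdot,\cdot}$ is already $\C$-valued and $\C$-bilinear (the codomain $\R$ written in the paper's definition of $\esct{\cdot,\cdot}$ is evidently a slip for $\K$), and in any case composing an $\R$-valued map with $\R\hookrightarrow\C$ would not produce a $\C$-bilinear form.
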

\begin{proof}
To prove the equality in \ref{wweuno} consider $x\in\rad\esct{\cdot,\cdot}$. This
happens exactly when $\escd{x,y}\approx 0$ for any $y$. Equivalently, for any real $\epsilon>0$ the set  $\{i\in I\colon\vert\esc{x,y}_i\vert<\epsilon\}$ is in $\U$. So $x$ is negligible and reciprocally. Now \ref{wwedos} is trivial from the previous item.
To prove \ref{wwetres}, take a basis $\{v_j\}$ of $V$ orthogonal relative to $\F$.
Then if $j\ne k$ we have $\esct{v_j,v_k}=\st(\escd{v_j,v_k})$. But $\escd{v_j,v_k}=[(\esc{v_j,v_k}_i)]=0$ hence $\esct{v_j,v_k}=0$. Thus the same basis
is also orthogonal for $\esct{\cdot,\cdot}$. Assume besides that $\F$ is robust. The inner products $\esc{\cdot,\cdot}_i$ are partially continuous relative to the $\esct{\cdot,\cdot}$-topology because of item \ref{wwedos} of this theorem and Proposition \ref{atupoi} item \ref{atupoi2}. Finally $\F\cup\{\esct{\cdot,\cdot}\}$ is nondegenerate. For the assertion in \ref{wwecuat} take into account that any orthogonal
basis $B$ relative to $\F$ is automatically an orthogonal basis of $\F\cup\{\esct{\cdot,\cdot}\}$ and reciprocally.
\end{proof}

\end{document}